\theoremstyle{plain}
\newtheorem{theorem}{Theorem}[section]
\newtheorem{lemma}[theorem]{Lemma}
\newtheorem{conjecture}[theorem]{Conjecture}
\theoremstyle{definition}
\newtheorem{remark}[theorem]{Remark}
\newtheorem{defn}[theorem]{Definition}
\numberwithin{equation}{section}
\numberwithin{equation}{section}
\def \F{\mathcal F}
\def \M{W}
\def \mc{\mathcal}
\def \v{\vskip 0.1in}
\def \n{\noindent}
\def \real{\mathbb{R}}
\def \p{\partial}
\def\t{\mathfrak{t}}
\def \M{{\mathcal M}}
\def \F{{\mathcal F}}
\def \p{\partial}
\def \F {\mathcal F}
\def \H {\mathcal H}
\def \M {\mathcal M}
\def \E {\mathcal E}
\def \varphi {\phi}
\begin{document}

\title{Extremal metrics on toric manifolds and homogeneous toric bundles}
\author[Li]{An-Min Li}
\address{Department of Mathematics,
Sichuan University,
 Chengdu, 610064, China}
\email{anminliscu@126.com}
\author[Lian]{Zhao Lian}
\address{School of Mathematics,
Southwest Jiaotong University,
Chengdu, 611756, China}
\email{zhaolian@swjtu.edu.cn}
\author[Sheng]{Li Sheng}
\address{Department of Mathematics,
Sichuan University,
Chengdu, 610064, China}
\email[Corresponding author]{lshengscu@gmail.com}
\thanks{ Li acknowledges the support of NSFC Grant NSFC11890660, NSFC11821001, NSFC11890663. \\
${}\quad\ $Lian acknowledges the support of NSFC Grant NSFC11901480, NSFC12071322, NSFC12071059.\\
${}\quad\ $Sheng acknowledges the support of NSFC Grant NSFC11871352, NSFC1196131001.}
{\abstract
In this paper, we prove the Yau-Tian-Donaldson conjecture of the filtration version for toric manifolds and homogeneous toric bundles.\endabstract
}

\maketitle


\section{\bf Introduction }\label{Sec-Intro}

Extremal metrics, introduced by E. Calabi, have been studied intensively in the past 30 years. The necessary conditions for the existence are conjectured to be related to certain stabilities. There are many works on this aspect. In particular,  Tian first made great progress toward understanding this. It was Tian who first  gave an analytic stability condition which he proved that is equivalent to the existence of a K\"{a}hler-Einstein metric (\cite{T1}). This condition is the {\em properness} of the Mabuchi functional. In \cite{T1}, Tian also defined the algebro-geometric notion of K-stability. In \cite{D1}, Donaldson generalized Tian's definition of K-stability by giving an algebro-geometric definition of the Futaki invariant and conjectured that it is equivalent to the existence of a cscK metric. The conjecture can be formulated as
\begin{conjecture}[Yau\cite{Y}, Tian\cite{T1}, Donaldson\cite{D1}]\label{conj_1}
The manifold $M$ admits a cscK metric in the class $c_1(L)$ if and only if $(M,L)$ is K-stable.
\end{conjecture}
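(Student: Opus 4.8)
The plan is to establish the two implications separately, since they are of very different natures. The direction that existence of a cscK metric in $c_1(L)$ forces $(M,L)$ to be K-stable is by now the more tractable one. First I would recall that to any test configuration one associates a geodesic ray in the space of Kähler potentials, and that the Donaldson--Futaki invariant equals, up to a positive constant, the asymptotic slope of the Mabuchi K-energy functional along that ray. Granting this, if $\omega$ is a cscK metric then it is a critical point of the K-energy, and one shows the K-energy is convex and bounded below along such rays; positivity of the slope at nontrivial configurations then yields $\mathrm{DF}\ge 0$, with strict positivity once the configuration has nonzero norm. The technical inputs here are convexity of the K-energy along weak geodesics, lower semicontinuity of its entropy part, and the identification of the slope with $\mathrm{DF}$ in the singular setting.

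For the converse --- K-stability implies existence --- the natural route is the variational/continuity method, reducing existence to the \emph{properness} of the Mabuchi functional. Concretely, I would first invoke a priori estimates for the cscK equation: one opens a scalar-curvature continuity path, shows that an entropy bound along the path upgrades to $C^\infty$ estimates, and then deduces from compactness that a cscK metric exists precisely when the K-energy is proper. The remaining, and decisive, step is to extract properness of the K-energy from the algebraic stability hypothesis, by comparing the asymptotic slope of the K-energy against a norm (for instance the $J$-functional) of the degeneration.

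The hard part will be exactly this last step, and it is where the conjecture \emph{as stated} is genuinely delicate. K-stability tests $\mathrm{DF}$ only against algebraic test configurations --- that is, against finitely generated filtrations of the section ring --- whereas properness must control the K-energy along \emph{all} destabilizing geodesic rays. Bridging this gap requires a quantitative, uniform version of the slope--$\mathrm{DF}$ inequality together with an approximation of arbitrary geodesic rays by algebraic ones; in full generality this approximation is not available, and plain K-stability appears to be too weak to force properness. I therefore expect the argument to compel a strengthening of the hypothesis --- to uniform K-stability, or to a filtration version of K-stability --- and to yield the equivalence only in that strengthened form. Making the admissible class of degenerations rich enough to detect properness, while keeping the Donaldson--Futaki functional computable on it, is the crux of the whole problem and the reason the conjecture in its original wording remains open.
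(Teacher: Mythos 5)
You were asked to prove a statement that the paper itself does not prove: Conjecture \ref{conj_1} is stated as a conjecture, the authors explicitly say it ``is still open in general,'' and they cite the examples of Apostolov--Calderbank--Gauduchon--T\o nnesen-Friedman \cite{ACGT} as evidence that K-stability in its original formulation may have to be modified for general polarized manifolds. So your refusal to close the argument is the correct outcome rather than a defect, and your diagnosis of \emph{why} it cannot be closed is exactly the paper's position: K-stability tests the Donaldson--Futaki invariant only against test configurations (equivalently, filtrations with finitely generated Rees algebra), which is too coarse a class to force properness of the Mabuchi functional along all destabilizing degenerations, and one must pass to uniform K-stability or to the filtration version ($\widehat{K}$-stability in the sense of Sz\'ekelyhidi). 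The paper's actual theorems (Theorem \ref{theorem_1.3} and Theorem \ref{theorem_1.1}) are precisely this strengthened statement, proved in the restricted setting of toric manifolds and homogeneous toric bundles.

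It is worth noting how your sketch maps onto what the paper does in that restricted setting. Your sufficiency route --- entropy bound along a continuity path, Chen--Cheng a priori estimates, compactness --- is exactly the paper's Section \ref{Sec-4}: $(A,\lambda)$-stability gives the entropy bound (Theorem \ref{theorem 5.1}) via Donaldson's identity $\M(\omega_f)=(2\pi)^n\F_a(u)$ and the Darvas--Guedj $d_1$ estimates, and then \cite{CC1}, \cite{CC2} close the continuity method. The step you flagged as the crux --- controlling non-algebraic destabilizers --- is resolved in the toric case not by approximating geodesic rays but by a convex-analysis argument on the polytope: Theorem \ref{theorem_stab} shows that any normalized limit $u$ of a destabilizing sequence with $\mathcal L_{A}(u)=0$ is bounded on $\bar\Delta$ (via the Legendre transform, sections $S_f(0,h)$, and the comparison function $u_h$), hence represents a genuine filtration, and this is what yields the equivalence of relative $\widehat{K}$-polystability with uniform relative K-polystability (Theorem \ref{theorem_5.2}). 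No analogue of this boundedness mechanism is known for general polarized manifolds, which is the precise reason the conjecture in its original wording remains open, as you predicted.
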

This conjecture were generalized to extremal metrics. Recently, there have been many progresses towards this conjecture (see \cite{BDL2}, \cite{CC1}, \cite{CC2}, \cite{DL}, \cite{Li}).
\v
In a sequence of papers, Donaldson initiated  a program to study the extremal metrics on toric manifolds:   Donaldson formulated  $K$-stability for polytopes and conjectured  that the stability implies the existence of the cscK metric on toric manifolds. In a sequence of papers (cf. \cite{D2,D3,D4}), Donaldson solved the problem for cscK metrics on toric surfaces. In \cite{CLS2}, Chen-Li-Sheng  solved the problem for extremal metrics for toric surfaces.

The conjecture is still open in general, and examples in \cite{ACGT} indicate that the condition of $K$-stability may be modified for polarized manifolds (see \cite{Sz1}, \cite{BHJ}, \cite{His}). Recently, the existence of cscK metrics under uniform stability condition has been proved for toric manifolds (see \cite{CC2}, \cite{Li}).

Sz\'ekelyhidi (see \cite{Sz1}) introduced $\widehat{K}$-polystability and proved that $(M, L)$ is $\widehat{K}$-polystable when $M$ admits a cscK metric in $c_1(L)$ and has discrete automorphism group.
He  states a variant of the Yau-Tian-Donaldson conjecture.
\begin{conjecture}
The manifold $M$ admits an extremal metric in $c_{1}(L)$ if and only if $(M, L)$ is relatively $\widehat{K}$-stable.
\end{conjecture}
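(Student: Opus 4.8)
The plan is to transfer the problem from the manifold to its moment polytope and to recast both sides of the conjecture as statements about a single convex-geometric functional. For a toric manifold $(M,L)$ the torus-invariant Kähler metrics in $c_1(L)$ are encoded, through the Legendre transform, by symplectic potentials $u$ on the moment polytope $\Delta\subset\mathbb{R}^n$ satisfying Guillemin's boundary conditions, and an extremal metric corresponds to a solution of Abreu's equation
\[
-\sum_{i,j}\frac{\partial^2 u^{ij}}{\partial x_i\,\partial x_j}=A,
\]
where $(u^{ij})$ is the inverse Hessian of $u$ and $A$ is the extremal affine function determined by the combinatorics of $\Delta$ (as in Donaldson's toric program \cite{D2,D3,D4}). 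On the algebraic side I would use the toric dictionary to identify the filtrations of the section ring underlying Sz\'ekelyhidi's $\widehat{K}$-stability \cite{Sz1} with convex functions $f$ on $\Delta$, so that relative $\widehat{K}$-stability becomes the assertion that the linear functional
\[
\mathcal{L}_A(f)=\int_{\partial\Delta}f\,d\sigma-\int_\Delta A\,f\,dx
\]
is nonnegative on all convex $f$ and vanishes only on affine ones. The filtration formulation, rather than the test-configuration one, is exactly what enlarges the test objects from rational piecewise-linear functions to the full convex cone, and this closure is what makes stability equivalent to a \emph{uniform} estimate later on.

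The "only if" direction is the easier half. Given an extremal symplectic potential $u$, I would integrate $\int_\Delta A\,f\,dx$ by parts twice; the boundary contributions assemble, via Guillemin's conditions, into $\int_{\partial\Delta}f\,d\sigma$, leaving
\[
\mathcal{L}_A(f)=\int_\Delta\sum_{i,j}u^{ij}f_{ij}\,dx.
\]
Since $(u^{ij})$ is positive definite, this is nonnegative for every convex $f$ and vanishes precisely when $f_{ij}\equiv 0$, i.e.\ when $f$ is affine. This reproduces the modified Futaki invariant as the derivative of the modified Mabuchi functional $\mathcal{M}$ along the direction $f$ and shows relative $\widehat{K}$-stability is necessary.

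The "if" direction is the substance, and I would attack it variationally. Relative $\widehat{K}$-stability in the filtration sense is first promoted to a uniform stability inequality $\mathcal{L}_A(f)\ge\lambda\int_{\partial\Delta}(f-\ell_f)\,d\sigma$, where $\ell_f$ is the affine part of $f$; this upgrade is legitimate precisely because the infimum defining stability is now taken over the closed convex cone supplied by the filtration picture. The uniform bound is then converted into properness (coercivity) of $\mathcal{M}$ by controlling the entropy term $-\int_\Delta\log\det(u_{ij})\,dx$ against $\mathcal{L}_A(u)$. Properness renders a minimizing sequence for $\mathcal{M}$ precompact in a suitable sense, its limit is a weak solution of Abreu's equation, and regularity upgrades it to a genuine extremal metric, as in the surface treatment of Chen-Li-Sheng \cite{CLS2}. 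For homogeneous toric bundles the identical scheme applies once $dx$ and $d\sigma$ are replaced by the fiber-weighted measures coming from the isotropy representation, so that $\mathcal{L}_A$ and $\mathcal{M}$ acquire the corresponding weights and the Abreu equation is replaced by its weighted analogue.

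The main obstacle is the a priori estimates underlying the regularity step. One must bound the solution of the fourth-order fully nonlinear Abreu equation: first a $C^0$ and modulus-of-convexity bound on $u$ extracted from the coercivity of $\mathcal{M}$, then interior estimates of Bernstein type for $\det(u_{ij})$ and its derivatives, and finally the delicate behaviour near the faces and vertices of $\Delta$ forced by Guillemin's conditions. In dimension $>2$ the interior estimates are the crux; I expect to rely on the affine-differential-geometric techniques for the Abreu equation (uniform interior convexity, $W^{2,p}$ and then $C^{2,\alpha}$ estimates) together with a blow-up analysis in which the uniform stability inequality is used to rule out degeneration of the minimizing sequence toward $\partial\Delta$.
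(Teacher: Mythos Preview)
First, a framing point: the statement you are attempting is stated in the paper as a \emph{conjecture} for general polarized manifolds; the paper does not prove it in that generality. What the paper does prove are the toric and homogeneous-toric-bundle cases (Theorems~\ref{theorem_1.3} and~\ref{theorem_1.1}), and your proposal is in effect a sketch of those. I will compare against the paper's proof of Theorem~\ref{theorem_1.3}.

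Your ``only if'' half, via the integration-by-parts identity $\mathcal{L}_A(f)=\int_\Delta u^{ij}f_{ij}\,d\mu$, is correct and matches the standard argument the paper cites.

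The genuine gap is in your upgrade from relative $\widehat{K}$-polystability to \emph{uniform} stability. You assert that ``this upgrade is legitimate precisely because the infimum defining stability is now taken over the closed convex cone supplied by the filtration picture,'' but this is exactly where the difficulty lies and where the paper does its real work. The normalized set $\{u\ \text{convex},\ u\ge u(p_o)=0,\ \int_{\partial\Delta}u\,d\sigma=1\}$ is \emph{not} compact: a sequence $u_k$ with $\mathcal{L}_A(u_k)\to 0$ converges only locally uniformly in the interior, and the limit $u$ may be unbounded on $\bar\Delta$ (boundary mass can escape). If $u$ is unbounded it does not correspond to a filtration, and the $\widehat{K}$-stability hypothesis gives you nothing. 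The paper's Section~\ref{Sec-Estimates}, culminating in Theorem~\ref{theorem_stab}, closes this gap by a Legendre-transform argument: one studies the sublevel sets $S_f(0,h)$ of the Legendre dual $f=u^*$, shows that if $Df(\overline{S_f(0,h)})$ never fills $\bar\Delta$ one can truncate $u$ to a bounded $u_h$ with $\mathcal{L}_A(u_h)<\mathcal{L}_A(u)=0$, contradicting stability. This boundedness lemma is the new idea, and your sketch does not supply any substitute for it.

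On the existence side your route also diverges from the paper's. You propose a direct variational scheme (minimize $\mathcal{M}$, extract a weak limit, then regularity via affine-geometric and blow-up techniques as in the surface work \cite{CLS2}). The paper instead runs the continuity method: once uniform stability is in hand, it bounds $\int_\Delta|u_A-v|\,d\mu$, converts this via the Darvas $d_1$-metric and Guedj's toric formula into a bound on the entropy $\int_M F e^F\omega_g^n$ (Theorem~\ref{theorem 5.1}), and then invokes the Chen--Cheng a~priori estimates \cite{CC1} to close the continuity path. Your proposed affine-differential regularity theory in dimension $n>2$ is, as you yourself flag, not established; the paper sidesteps this entirely by using the entropy bound plus Chen--Cheng, which is the decisive input in higher dimensions.
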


In this paper, we consider toric manifolds and homogeneous toric bundles. The equation for toric manifolds is (\ref{eqn-1.3}). We prove the following theorem.
\begin{theorem}\label{theorem_1.3}
Let $(M,\omega)$ be a $n$-dimensional compact toric manifold and $\Delta$ be its Delzant polytope. $A$ is a smooth function on $\bar\Delta$. Then $(\Delta, A)$ is relatively $\widehat K$-polystable if and only if there is a smooth $T^{n}$-invariant metric $g$ on $M$ such that the scalar curvature of $g$ is $A$.
\end{theorem}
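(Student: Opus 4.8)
The plan is to transfer the entire problem to the Delzant polytope $\Delta$ by the Guillemin--Abreu correspondence, which identifies $T^n$-invariant Kähler metrics $g$ with convex \emph{symplectic potentials} $u$ on $\Delta$ subject to the Guillemin boundary conditions along the faces, and under which the scalar curvature of $g$ becomes the Abreu operator
\begin{equation*}
\mathcal{S}(u) = -\sum_{i,j=1}^{n} \frac{\partial^2 u^{ij}}{\partial x_i\,\partial x_j},
\end{equation*}
where $(u^{ij})$ is the inverse of the Hessian $(u_{ij})$. Thus the theorem amounts to solving the fourth-order equation $\mathcal{S}(u)=A$ of (\ref{eqn-1.3}) in the class of admissible potentials. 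Since integrating $\mathcal{S}(u)=A$ against affine functions and applying the Abreu--Donaldson identity forces $A$ to satisfy the $n+1$ linear balancing conditions, I assume these throughout. The natural variational framework is the Donaldson functional
\begin{equation*}
\mathcal{F}_A(u) = -\int_\Delta \log\det(u_{ij})\,dx + \mathcal{L}_A(u), \qquad \mathcal{L}_A(u) = \int_{\partial\Delta} u\,d\sigma - \int_\Delta A\,u\,dx,
\end{equation*}
whose Euler--Lagrange equation is exactly $\mathcal{S}(u)=A$, and where the balancing conditions make $\mathcal{L}_A$ annihilate the affine functions.

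For the direction that existence implies stability, I would start from a smooth solution $u$ and integrate by parts to identify $\mathcal{L}_A(f)$, for a normalized convex $f$, with the Donaldson--Futaki-type invariant of the object associated to $f$. Because $u\mapsto -\int_\Delta\log\det(u_{ij})\,dx$ is convex, $\mathcal{F}_A$ is convex along the segments $u+tf$, so a critical point is a minimizer; computing the asymptotic slope of $\mathcal{F}_A$ along the ray $t\mapsto u+tf$ yields $\mathcal{L}_A(f)\ge 0$, with equality forcing $f$ affine. Passing from rational piecewise-linear $f$ to general convex functions, and then to the filtrations of Székelyhidi, gives relative $\widehat K$-polystability; the \emph{relative} and \emph{poly} aspects are handled by quotienting by the affine functions on $\Delta$, which carry the extremal vector field.

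The substantive direction is that relative $\widehat K$-polystability implies existence, and here the filtration formulation is indispensable. The strategy is to show that stability forces a suitable coercivity of $\mathcal{F}_A$ modulo affine functions and then minimize. The key step is a structural dichotomy for a normalized minimizing sequence $\{u_k\}$: either it stays bounded in the relevant norm, in which case one extracts a minimizer, or it escapes to infinity, in which case a rescaling converges to a destabilizing \emph{filtration} along which the asymptotic slope of $\mathcal{F}_A$ is nonpositive, contradicting stability. This is precisely why plain convex functions do not suffice: the limiting destabilizer need not be piecewise-linear or finitely generated, and only the filtration/$\widehat K$ version is calibrated to match the analytic condition. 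Establishing this dichotomy---and controlling the behavior of $-\int_\Delta\log\det(u_{ij})\,dx$ together with the boundary term under such degenerating limits---is the main obstacle, and it is what the filtration machinery is designed to overcome.

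Once coercivity is secured, the argument is completed by regularity theory for the Abreu equation. Interior regularity follows from reading $\mathcal{S}(u)=A$ as a coupled system---a Monge--Ampère equation for $\det(u_{ij})$ together with a linear equation for $(u^{ij})$---and applying Caffarelli-type interior estimates using the convexity of $u$. The delicate point is boundary regularity: the Guillemin conditions prescribe the exact singular behavior of $u$ along the faces of $\Delta$, and one must show the minimizer realizes this structure and is smooth up to $\partial\Delta$ in the sense that the associated metric $g$ extends smoothly across the corresponding divisors of $M$. Combining the interior and boundary estimates upgrades the minimizer to a smooth admissible solution, which translates back to a $T^n$-invariant metric with scalar curvature $A$. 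For homogeneous toric bundles the same scheme applies verbatim once $\Delta$ is replaced by the relevant moment polytope and the fiber contribution is absorbed into the measures defining $\mathcal{L}_A$.
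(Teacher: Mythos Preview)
Your overall architecture for the hard direction is right---one does reduce relative $\widehat K$-polystability to a coercivity statement for $\mathcal{F}_A$ (i.e., to uniform $K$-polystability) by arguing that if coercivity fails, a normalized sequence $u_k$ with $\mathcal{L}_A(u_k)\to 0$ produces a limiting convex destabilizer. But the paper's decisive technical point, which you do not supply, is that the limit $u$ is actually \emph{bounded} on $\bar\Delta$ and hence genuinely represents a filtration. This is nontrivial: a priori $u$ could blow up along $\partial\Delta$ while keeping $\int_{\partial\Delta}u\,d\sigma$ finite. The paper proves this $L^\infty$ bound (Theorem~\ref{theorem_stab}) by passing to the Legendre transform $f=u^*$, studying the images $W_h=Df(\overline{S_f(0,h)})$ of sublevel sets, constructing truncated convex functions $u_h$ that agree with $u$ on $W_h$ and are affine on each ray outside, and showing via a polar-coordinate comparison of $\int_{\partial\Delta}$ and $\int_\Delta$ that if $W_h\ne\bar\Delta$ for all $h$ then $\mathcal{L}_A(u_h)<\mathcal{L}_A(u)=0$, contradicting stability. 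Without this boundedness your ``limit is a filtration'' step is a gap.

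There is a second, more serious gap. After coercivity, you propose to minimize $\mathcal{F}_A$ and then run direct interior/boundary regularity for the Abreu equation on $\Delta$. That is the Donaldson programme which is only known to close for $n=2$; in higher dimensions the required boundary regularity for Abreu's equation is open, so your last paragraph does not go through. The paper avoids this entirely: having obtained uniform $K$-stability, it passes \emph{back} to the K\"ahler side via Donaldson's identity $\mathcal{M}(\omega_f)=(2\pi)^n\mathcal{F}_a(u)$ and Guedj's formula $d_1(\phi_1,\phi_2)=\int_\Delta|u_1-u_2|\,d\mu$, estimates $J_{-\mathrm{Ric}(\omega_g)}$ to obtain a uniform entropy bound $\int_M F e^F\omega_g^n\le C$, and then invokes the Chen--Cheng a priori estimates to close a continuity method in $t$ between the Guillemin metric and the target $A$. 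The point is that Chen--Cheng's estimates are dimension-free and do not require any toric boundary regularity for the Abreu equation; the toric reduction is used only to produce the entropy bound. Your sketch should replace the Abreu regularity step with this entropy/Chen--Cheng route.
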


Let $A$ be a constant or a linear function on $\Delta$. As a consequence of Theorem \ref{theorem_1.3} we have solved the Yau-Tian-Donaldson conjecture of the filtration version for $n$-dimensional toric manifolds.


In \cite{D5}, Donaldson suggested to study the existence of K\"ahler metrics
of constant scalar curvatures on toric fibrations. He presented
the underlying equation, which we call the generalized Abreu equation (see (\ref{gAbreu})). In \cite{CHLLS}, Chen-Han-Li-Lian-Sheng  solved the problem for extremal metrics for  homogeneous toric bundles with two-dimensional fibers under the uniform $K$-stability condition. In this paper, we prove the following theorem.

\begin{theorem}\label{theorem_1.1}
Let $(M,\omega)$ be a $n$-dimensional compact toric manifold and $\Delta$ be its
Delzant polytope. Let $G/K$ be a generalized flag manifold and
$G\times_KM$ be the homogeneous toric bundle. Let $\mathbb{D}>0$ be the
Duistermaat-Heckman polynomial, and $A\in C^{\infty}(\bar{\Delta})$ be a given smooth function. Then, $(\Delta, \mathbb{D}, A)$ is relatively $\widehat K$-polystable if and only if there is a smooth
$(G,  T^{n})$-invariant metric $\mathcal{G}$ on $G\times_{K}M$
such that the scalar curvature of $\mathcal{G}$ is $\mathcal{S}=A+h_G$.
\end{theorem}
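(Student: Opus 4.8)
The plan is to translate the existence statement on the total space $G\times_{K}M$ into solving the generalized Abreu equation (\ref{gAbreu}) on the Delzant polytope $\Delta$ weighted by the Duistermaat--Heckman polynomial $\mathbb{D}$, and then to run the variational machinery already used for the toric case in Theorem \ref{theorem_1.3}. Following \cite{CHLLS}, a $(G,T^{n})$-invariant K\"ahler metric $\mathcal{G}$ on the homogeneous toric bundle is encoded by a $T^{n}$-invariant symplectic potential $u$ on $\Delta$, and the requirement that the scalar curvature of $\mathcal{G}$ equal $\mathcal{S}=A+h_{G}$ becomes the weighted fourth-order equation
\begin{equation}\label{eq:wAbreu}
-\sum_{i,j}\frac{\partial^{2}(\mathbb{D}\,u^{ij})}{\partial x_{i}\partial x_{j}}=\mathbb{D}\,A\qquad\text{on }\Delta,
\end{equation}
where $(u^{ij})$ is the inverse of the Hessian $(u_{ij})$ and the fiber contribution $h_{G}$ has been absorbed on the total space. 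Thus the theorem is equivalent to the assertion that $(\Delta,\mathbb{D},A)$ is relatively $\widehat{K}$-polystable if and only if (\ref{eq:wAbreu}) admits a smooth solution in the appropriate class of convex functions with Guillemin boundary behaviour. Everything now takes place on $\bar{\Delta}$, with $\mathbb{D}>0$ entering as a fixed smooth positive weight; this is the structural feature that lets me adapt the unweighted toric arguments.

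For the forward direction (existence of the metric implies stability), I would test a smooth solution $u$ of (\ref{eq:wAbreu}) against an arbitrary convex function $f$ on $\bar{\Delta}$ and, by integration by parts, obtain the identity
\[
\mathcal{L}_{A}(f)=\int_{\partial\Delta}f\,\mathbb{D}\,d\sigma-\int_{\Delta}f\,A\,\mathbb{D}\,dx=\int_{\Delta}\mathbb{D}\,u^{ij}f_{ij}\,dx\ge 0,
\]
where $d\sigma$ is the boundary measure normalized by the defining affine functions of the facets. Positivity of the integrand follows from convexity of $f$ together with positive definiteness of $(u^{ij})$, so $\mathcal{L}_{A}(f)\ge 0$; the equality case, analyzed modulo the affine functions so as to incorporate the relative (extremal) normalization carried by $h_{G}$, yields relative $\widehat{K}$-polystability. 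This is the direct weighted analogue of the toric computation, the weight $\mathbb{D}$ appearing only as an extra bounded positive factor.

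The substance is the reverse direction (stability implies existence). I would set up the variational problem of minimizing the weighted Mabuchi-type functional
\[
\mathcal{F}_{A}(u)=-\int_{\Delta}\log\det(u_{ij})\,\mathbb{D}\,dx+\mathcal{L}_{A}(u)
\]
over the space of suitably normalized convex functions on $\Delta$, whose Euler--Lagrange equation is exactly (\ref{eq:wAbreu}). The decisive consequence of relative $\widehat{K}$-polystability is a coercivity (properness) estimate for $\mathcal{F}_{A}$ modulo the finite-dimensional space of affine directions associated with the extremal vector field; this is precisely where the refined $\widehat{K}$-notion is needed, since ordinary $K$-polystability would not suffice to launch the argument. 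I would then take a minimizing sequence, extract a weak limit in the class of convex functions, and upgrade it to a genuine smooth solution through a priori estimates: an interior $C^{0}$ bound from properness, interior first- and second-order (Hessian) estimates in the spirit of Trudinger--Wang and of the authors' earlier work, and boundary estimates near the faces of $\Delta$ forcing the correct Guillemin behaviour of $u$.

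The main obstacle will be exactly these a priori estimates in the presence of the weight $\mathbb{D}$ and of the relative normalization. The interior Hessian estimate for the weighted Abreu equation is the crux: one must bound $\det(u_{ij})$ and the second fundamental form of the graph of $u$ uniformly, and the weight modifies the fourth-order operator in a way that has to be absorbed without breaking the convexity-based monotonicity available in the unweighted case. Equally delicate is the boundary regularity, namely ruling out degeneration of the minimizer along the lower-dimensional faces of $\bar{\Delta}$ and propagating the interior estimates up to $\partial\Delta$. Since $\mathbb{D}>0$ is assumed on all of $\bar{\Delta}$, I expect the boundary analysis to parallel closely the toric case of Theorem \ref{theorem_1.3}, so that the principal new difficulties are the uniform interior estimates for the weighted operator and the verification that the coercivity furnished by relative $\widehat{K}$-polystability is strong enough to start the regularity bootstrap.
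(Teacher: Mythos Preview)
Your forward direction is fine and matches the standard weighted integration-by-parts argument (this is essentially Theorem 2.3 of \cite{CHLLS}, cited in the paper for necessity).

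The reverse direction, however, has a genuine gap at the step you flag as ``decisive'': you assert that relative $\widehat{K}$-polystability yields a coercivity/properness estimate for $\mathcal{F}_{A}$, but you give no mechanism for obtaining it. This implication is precisely the new content of the paper, and it is \emph{not} automatic. A priori, $\widehat{K}$-polystability only says $\mathcal{L}_{A}(u)\ge 0$ with equality forcing $u$ affine; this is far weaker than a quantitative lower bound $\mathcal{L}_{A}(u)\ge \lambda\int_{\partial\Delta}u\,\mathbb{D}\,d\sigma$. The paper closes this gap via Theorem~\ref{theorem_stab} (and its weighted analogue Theorem~\ref{theorem_stab-1}): if uniform stability failed, a normalized minimizing sequence would converge to a nontrivial $u$ with $\mathcal{L}_{A}(u)=0$, and the key lemma shows that such a $u$ must be bounded on $\bar{\Delta}$, hence represents a genuine filtration, contradicting $\widehat{K}$-polystability. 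The boundedness proof is the technical heart (Section~\ref{Sec-Estimates}), using Legendre duality and the geometry of the sections $S_{f}(0,h)$; nothing in your outline supplies this.

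There is also a methodological divergence once properness is in hand. You propose to minimize $\mathcal{F}_{A}$ directly and prove regularity of a weak limit via Trudinger--Wang type interior estimates for the weighted Abreu operator. The paper does not do this: it runs a continuity method in $t$ between the Guillemin metric and the target $A$, and closedness is obtained by showing that uniform stability bounds the entropy $\int_{M}Fe^{F}\omega_{g}^{n}$ (Theorem~\ref{theorem 5.1-1}), after which the Chen--Cheng estimates \cite{CC1,CC2} on the total space give all higher derivatives. Your proposed route via direct minimization and interior Hessian bounds for the weighted fourth-order equation on $\Delta$ is a harder and, in dimension $n>2$, currently unavailable regularity theory; the paper sidesteps it entirely by passing back to the K\"ahler side and invoking \cite{CC1}.
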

Refer to the following sections for notations and terminologies. Let $\mathcal{S}$ be a constant or a linear function on $\Delta$. As a consequence of Theorem \ref{theorem_1.1} we have solved the Yau-Tian-Donaldson conjecture of the filtration version for homogeneous toric bundles. Note that even if $\mathcal{S}$ is a constant function, the right hand of equation (\ref{gAbreu}) is not a constant function.

In \cite{Sz2}, Sz\'ekelyhidi gave descriptions of the optimal destabilizer for unstable toric manifolds. We will prove that the convex function of the optimal destabilizer is bounded and gives a filtration.

\begin{remark}
In \cite{Del}, Delcroix expressed $K$-stability of polarized spherical varieties in terms of combinatorial data and provided a combinatorial sufficient condition of $G$-uniform $K$-stability. The appendix of \cite{Del} by Yuji Odaka showed that for non-singular spherical varieties, $G$-uniform $K$-stability is equivalent to existence of cscK metrics. Jubert (see \cite{Jub}) showed the equivalence between the existence of extremal K\"ahler metrics on the total space of semi-simple principal toric fibrations and weighted uniform $K$-stability of the corresponding Delzant polytopes.

The methods in this paper can be applied to explore the problems for which the scalar curvature equations can be reduced to the equations on polytopes, such as smooth spherical varieties and semi-simple principal toric fibrations.
\end{remark}

This paper is organized as follows. In Section \ref{Sec-Stabilities},
we briefly review the notion of relative $\widehat K$-polystability and uniform $K$-polystability. In Section \ref{Sec-Estimates}, we discuss some properties of convex functions on polytopes. In Section \ref{Sec-4}, we prove the Theorem \ref{theorem_1.3}. In Section \ref{Sec-5}, after briefly reviewing homogeneous toric bundles and the generalized Abreu equation, we sketch the proof of Theorem \ref{theorem_1.1}. In Section \ref{Sec_6}, we prove the convex function of the optimal destabilizer is bounded and gives a filtration for unstable toric manifolds.

\section{\bf Preliminaries}\label{Sec-Stabilities}

Let $(M,\omega)$ be a toric manifold of dimension $n$, $\Delta$ be its corresponding Delzant Polytope. Let $\tau: M\to \mathfrak{t}^\ast$ be the moment map, where $\t^\ast$, identified with $(\real^n)^*$, is the  dual of the  $\mathfrak{t}$ which is the Lie algebra of $T^n$. We denote by $M^\circ$ the open dense subset of $M$ defined by $M^\circ = \left\{ p\in M :T^n-\mbox{action is free at } p\right\}.$ We can describe $M^\circ$ in complex (holomorphic) coordinates as
$$M^\circ  = \mathbb R^n \times i T^n = \left\{ x+iy : x\in \mathbb R^n, y\in \mathbb R^n /\mathbb Z^n\right\}.$$
The K\"ahler form is given by a potential $f\in C^\infty(M^\circ)$, the potential $f$ depends only on the $x$ coordinates: $f = f(x) \in C^\infty(\mathbb R^n)$. Since $f$ is a (smooth) strictly convex function on $\mathfrak{t}$, the gradient of $f$ defines a (normal) map $\nabla^f$ from $\mathfrak{t}$ to  $\mathfrak{t}^\ast$:
$$
\xi=(\xi_1,...,\xi_n)=\nabla^f(x) =\left(\frac{\partial f}{\partial x_1},...,\frac{\partial f}{\partial x_n}\right).
$$
The function $u$ on $\mathfrak{t}^\ast$
$$
u(\xi)=\langle x,\xi\rangle - f(x).
$$
is called the Legendre transformation of $f$. We write $u=f^*$. Conversely, $f=u^*$.

In terms of coordinates $\xi$ and the Legendre transform function $u$ of $f$, the scalar curvature can be written as
$$
\mathcal{S}(u)= -\sum U^{ij}w_{ij},
$$
where $(U^{ij})$ is the cofactor matrix of the Hessian matrix $(u_{ij})$, $w=(\det(u_{ij}))^{-1}$.

\vskip 0.1in
Suppose that $\Delta$ is given by
$$
\Delta=\{\xi|l_i(\xi)> 0,\;\;\; 0\leq
i\leq d\}
$$
where $l_i$ are liner functions given by
\begin{equation}\label{eqn_1.1}
l_i(\xi):=\langle\xi,a_i\rangle- \lambda_i.
\end{equation}
Guillemin constructed  a natural K\"{a}hler  form $\omega_g$ and we denote the class by $[\omega_g]$. We call it the Guillemin metric.
Let
\begin{equation}\label{eqn_1.2}
v=g^*=\sum_{i} l_i\log l_i,
\end{equation}
where $l_i$ are linear functions determining the facet of $\Delta$, $g$ is the potential function of the Guillemin metric. The prescribed scalar curvature problem reduces to finding a smooth convex solution in $\Delta$ for the 4-th order PDE
\begin{equation}\label{eqn-1.3}
-\sum U^{ij}w_{ij}=A
\end{equation}
subject to the boundary condition
$$u-\sum_{i} l_i\log l_i \in C^{\infty}(\bar{\Delta}),$$
where $A$ be a smooth function on $\bar\Delta$. \eqref{eqn-1.3} is called the Abreu equation (see \cite{Abr}).
\v
For any smooth function $A$ on $\bar\Delta$, Donaldson defined  a
functional on continuous convex functions on $\bar\Delta$:
$$\mc F_{A}(u)= -\int_{\Delta}\log \det(u_{ij})d\mu + \mc L_{A}(u),$$
where $\mc L_{A}$ is the linear functional
$$\mc L_{A}(u) = \int_{\partial \Delta}u d\sigma - \int_{\Delta}Au
d\mu,$$
where $d\mu$ is the  Lebesgue measure on $\mathbb R^n$, and on each face $F$, $d\sigma$ is a constant multiple of the standard $(n-1)$-dimensional Lebesgue measure (see \cite{D1} for details).

We introduce several classes of functions. Denote $\mathcal P$ the rational piecewise linear convex functions on $\bar\Delta$. Set
\begin{align*}
\mc C&=\{u\in C(\bar\Delta):\, \text{$u$ is convex on $\bar\Delta$ and smooth in $\Delta$}\},\\
\mathbf{S}&=\{u\in C(\bar\Delta):\, \text{$u$ is convex  on $\bar\Delta$ and $u-v$ is smooth on $\bar\Delta$}\},\end{align*}
For a fixed
point $p_o\in \Delta$, we consider
\begin{align*}
\mathcal P_{p_o}&=\{u\in\mathcal P:\, u\geq u(p_o)=0\},\\
{\mc C}_{p_o}&=\{u\in \mc C:\, u\geq u(p_o)=0\},\\
\mathbf{S}_{p_o}&=\{ u\in \mathbf S :\, u\geq u(p_o)=0\}.\end{align*}
We say functions in ${\mc P}_{p_o}$, ${\mc C}_{p_o}$ and ${\mathbf{S}}_{p_o}$ are {\it normalized} at $p_o$. Let
\begin{eqnarray*}
\mc C_\ast&=&\{u| \mbox{there exist a constant $C>0$  and  a  sequence of $\{u_k\}$ in ${\mc P}_{p_o}$ }\\
&&\mbox{such that $\int_{\partial\Delta} u_kd\mu<C$ and
$u_k$  locally uniformly converges to} \\
&& \mbox{$u$ in $\Delta$}\}.
\end{eqnarray*}
Let $P>0$ be a  constant, we define
$$
\mc C_\ast^P=\{u \in\mc C_\ast| \int_{\partial\Delta} u_k d\mu\leq P\}.
$$

\subsection{(Relative) $K$-polystability}
In \cite{D1}, Donaldson introduced the (relative) $K$-polystability by test configurations. For toric case, the definition is

\begin{defn}\label{definition_2.1.1}
 Let $A\in C^{\infty}(\bar\Delta)$ be a smooth function on $\bar\Delta$.
$({\Delta},A)$ is called {\it relatively $K$-polystable} if $\mc
L_{A}(u)\geq 0$ for all rational piecewise-linear convex functions
$u$, and $\mc L_{A}(u)=0$ if and only if $u$ is a linear function.
\end{defn}

\begin{remark}\label{remark 1.3}
Since every continuous convex function on $\bar\Delta$ can be approximated by rational piecewise-linear convex functions uniformly, so if $({\Delta},A)$ is relatively $K$-polystable, we have $\mathcal L_{A}(u)\geq 0$ for every continuous convex function on $\bar\Delta$.
\end{remark}

\subsection{(Relative) $\widehat K$-stability}
An example of Apostolov-Calderbank-Gauduchon-T\o nnesen-Friedman (see \cite{ACGT}) suggests that $K$-stability might not be the correct one for general polarized manifolds. Sz\'{e}kelyhidi introduced  a modified definition of $K$-stability, which is called $\widehat K$-stability (see \cite{Sz1}).
\begin{defn}
Let $(M,L)$ be a polarized variety, $R_k=H^{0}\left(M, L^{k}\right)$, then $R=\bigoplus_{k \geqslant 0}R_k$ is a graded ring. A filtration of $R$ is a  filtration
$$
\mathbb C=F_{0} R \subset F_{1} R \subset \ldots \subset R
$$
such that
\begin{itemize}
\item[1.] $\left(F_{i} R\right)\left(F_{j} R\right) \subset F_{i+j} R$,
\item [2.] $F_iR=\bigoplus_{k \geqslant 0}F_iR\cap R_k$,
\item[3.] $R=\bigcup_{i \geqslant 0} F_{i} R$.
\end{itemize}
\end{defn}

\begin{remark}
Sz\'ekelyhidi observes if the Rees algebra of a filtration $\mathbf F$ is finitely generated, the scheme $Proj_{\mathbb C[t]}Rees(\mathbf F)$ is a test configuration for $(X, L)$. Conversely, a construction of Witt-Nystr\"om defines a filtration from any test configuration. So it is the same to talk about test configurations as filtrations with finitely generated Rees algebra (see \cite{Sz1}, \cite{WN}, \cite{BHJ}, \cite{D6}).
\end{remark}

Let $u$ be a convex function representing a filtration $\chi_{u}$. Following \cite{Sz1},
let $u_k$ be the largest convex functions which on the points $\alpha\in\Delta\cap\frac{1}{k}\mathbb Z^n$ is defined by
$$u_k(\alpha)=\frac{1}{k}\lceil ku(\alpha)\rceil,\;(k=1,2,...).$$
Then the test-configurations obtained by the rational piecewise-linear functions $u_k$ is an approximation to the filtration $\chi_{u}$.


We consider the toric cases. For any convex function $u:\bar\Delta\to \mathbb R$, we can define a filtration $\chi_u$, (see \cite{Sz1}). This filtration arises from a test-configuration if and only if $u$ is rational piecewise linear.

By the definition of norms of filtrations (see \cite{Sz1}), we have
$$\|\chi_u\|^2=\inf_{\ell}\left\{\int_{\Delta}(u+\ell)^{2}d\mu-\frac{\left(\int_{\Delta}(u+\ell)d\mu\right)^2}{Vol(\Delta)}\right\},$$
where $\ell$ runs through all the affine functions.
Then we can define the following $\widehat K$-stability for the toric case.

\begin{defn}\label{defn_toric_hat_k}
 Let $A\in C^{\infty}(\bar\Delta)$ be a smooth function on $\bar\Delta$. $(\Delta,A)$ is called {\it relatively $\widehat{K}$-polystable} if $\mc L_{A}(u)\geq 0$ for all convex  functions $u:\bar\Delta\to \mathbb R$, and $\mc L_{A}(u)=0$ if and only if $u$ is a linear function in $\Delta$.
\end{defn}


\begin{remark}
It is known that every continuous convex function $u$ defined in $\Delta$ has a lower semi-continuous regularization, and the lower semi-continuous regularization of $u$ is the smallest convex function which equals to $u$ in $\Delta$. Thus, in the definition of relative $\widehat{K}$-polystability, we can let $u$ be lower semi-continuous convex functions on $\bar\Delta$.
\end{remark}

When $A=a$ is a constant function, Definition \ref{defn_toric_hat_k} is the same as the definition for the toric case in \cite{Sz1}.

\subsection{Uniform $K$-polystability}
In \cite{D2}, Donaldson also proposed a stronger version of stability which we call   {\em uniform stability}.
\begin{defn}\label{definition_2.1.5}
$({\Delta},A)$ is called {\it uniformly relatively $K$-polystable}
if for any $u\in {\mc P}_{p_o}(\Delta)$
$$
\mc L_A(u)\geq \lambda\int_{\partial \Delta} u d \sigma
$$
for some constant $\lambda>0$. Sometimes, we say that $\Delta$ is $(A,\lambda)$-stable.
\end{defn}


The following lemma is Lemma 3.1 in \cite{CLS4}.
\begin{lemma}\label{lemma_1.2.12}
For any lower semi-continuous function $u$ in $\mc C_\ast^P$, there is a sequence of functions $u_k\in \mc C$
such that  $u_k$ locally uniformly converges to $u$ in $\Delta$ and
\begin{eqnarray}
&&\int_{\partial\Delta} u d\sigma=\lim_{k\to\infty}\int_{\partial\Delta} u_kd\sigma, \label{eqn_A}\\
&&\mc L_A(u)=\lim_{k\to\infty}\mc L_A(u_k).\label{eqn_B}
\end{eqnarray}
\end{lemma}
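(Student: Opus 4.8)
The plan is to tame the boundary behaviour of $u$ by a dilation towards the normalizing point $p_o$, and only afterwards to mollify. Since every $u\in\mc C_\ast^P$ arises as a locally uniform limit in $\Delta$ of normalized functions in $\mc P_{p_o}$, I may assume $u\ge u(p_o)=0$, that $u$ is convex and lower semi-continuous on $\bar\Delta$ and continuous in $\Delta$, that $\int_{\partial\Delta}u\,d\sigma$ is finite, and that $u\in L^1(\Delta)$. First I would, for $0<\lambda<1$, set $\tilde u_\lambda(\xi)=u\big(p_o+\lambda(\xi-p_o)\big)$, a convex function defined on the enlarged polytope $\Delta_\lambda=p_o+\lambda^{-1}(\Delta-p_o)$ which contains $\bar\Delta$ in its interior; in particular $\tilde u_\lambda$ is continuous on a neighbourhood of $\bar\Delta$. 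Fixing $\xi$ and writing $\psi_\xi(\lambda)=u(p_o+\lambda(\xi-p_o))$, the function $\psi_\xi$ is convex in $\lambda$ with minimum $\psi_\xi(0)=0$, hence nondecreasing on $[0,1]$; combining this with the lower semi-continuity of $u$ one gets $\psi_\xi(\lambda)\nearrow u(\xi)$ as $\lambda\to1^-$ for every $\xi\in\bar\Delta$ (the radial limit recovers the lower semi-continuous convex boundary value). Thus $0\le\tilde u_\lambda\le u$ with $\tilde u_\lambda\nearrow u$ pointwise on $\bar\Delta$, and $\tilde u_\lambda\to u$ locally uniformly in $\Delta$ by continuity of $u$ there.

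Next I would pass the two integrals to the limit. Because $\tilde u_\lambda|_{\partial\Delta}\nearrow u|_{\partial\Delta}$, the monotone convergence theorem gives $\int_{\partial\Delta}\tilde u_\lambda\,d\sigma\to\int_{\partial\Delta}u\,d\sigma$. Splitting $A=A_+-A_-$ and applying monotone convergence to each of $A_\pm\tilde u_\lambda$, together with $u\in L^1(\Delta)$ and the boundedness of $A$, gives $\int_\Delta A\tilde u_\lambda\,d\mu\to\int_\Delta Au\,d\mu$. Hence $\mc L_A(\tilde u_\lambda)\to\mc L_A(u)$, i.e.\ the content of \eqref{eqn_A} and \eqref{eqn_B} already holds for the dilations $\tilde u_\lambda$.

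It remains to replace $\tilde u_\lambda$ by a genuinely smooth competitor. For fixed $\lambda$ I would mollify, $\tilde u_{\lambda,\epsilon}=\tilde u_\lambda*\rho_\epsilon$, with a standard mollifier $\rho_\epsilon$ of radius $\epsilon<\mathrm{dist}(\bar\Delta,\partial\Delta_\lambda)$. Convolution with a nonnegative kernel preserves convexity, so $\tilde u_{\lambda,\epsilon}$ is convex and smooth on a neighbourhood of $\bar\Delta$, whence $\tilde u_{\lambda,\epsilon}\in\mc C$. Since $\tilde u_\lambda$ is uniformly continuous on a compact neighbourhood of $\bar\Delta$, we have $\tilde u_{\lambda,\epsilon}\to\tilde u_\lambda$ uniformly on $\bar\Delta$ as $\epsilon\to0$, so the boundary integral, the weighted interior integral, and therefore $\mc L_A$ all converge. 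Choosing $\lambda_k\to1$ and then, for each $k$, $\epsilon_k>0$ small enough, the diagonal sequence $u_k:=\tilde u_{\lambda_k,\epsilon_k}\in\mc C$ converges to $u$ locally uniformly in $\Delta$ and satisfies \eqref{eqn_A} and \eqref{eqn_B}.

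The delicate point, and the one I would expect to cost the most work, is the boundary identity \eqref{eqn_A}: one must verify that for $d\sigma$-almost every point of each facet the radial limit of $u$ coincides with its lower semi-continuous convex boundary value, and that the monotone convergence argument survives both the possibility that $u=+\infty$ on a $d\sigma$-null part of $\partial\Delta$ and the presence of the lower-dimensional faces. The dilation is exactly what turns the a priori merely lower semi-continuous boundary trace into a monotone limit of continuous functions, which is what legitimizes \eqref{eqn_A} and makes the subsequent mollification harmless; checking that the enlargement $\Delta_\lambda$ leaves enough room for the mollifier while keeping $\int_{\partial\Delta}u_k\,d\sigma$ under control is the technical heart of the matter.
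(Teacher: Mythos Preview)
The paper does not prove this lemma; it simply records it as Lemma~3.1 of \cite{CLS4}. So there is no in-paper argument to compare against, and your task reduces to giving a self-contained proof.

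Your approach---dilate towards the normalizing point $p_o$ to pull the boundary strictly inside, then mollify---is correct and is in fact the standard one for this kind of statement. The key points all check out: since $u\ge u(p_o)=0$ and $u$ is convex on $\bar\Delta$ (the lower semi-continuous hull of a convex function being convex), the radial function $\psi_\xi(\lambda)=u((1-\lambda)p_o+\lambda\xi)$ satisfies $\psi_\xi(\lambda)\le\lambda u(\xi)$ by convexity and $\lim_{\lambda\to1^-}\psi_\xi(\lambda)\ge u(\xi)$ by lower semi-continuity, so indeed $\tilde u_\lambda\nearrow u$ pointwise on $\bar\Delta$. Monotone convergence then handles both the boundary integral (this is where finiteness of $\int_{\partial\Delta}u\,d\sigma\le P$, coming from Proposition~5.2.6 of \cite{D1}, is used) and, after splitting $A=A_+-A_-$, the interior integral. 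The mollification step is unproblematic because $\tilde u_\lambda$ is continuous on the compact set $p_o+\lambda(\bar\Delta-p_o)\subset\Delta$, hence uniformly continuous on a neighbourhood of $\bar\Delta$.

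Two small remarks. First, you should say explicitly that $\int_\Delta u\,d\mu<\infty$ follows from $\int_{\partial\Delta}u\,d\sigma\le P$ via the elementary comparison in \cite{D1} (otherwise the dominated/monotone convergence for the interior term is not justified). Second, the last paragraph of your write-up flags the boundary identity as ``delicate'', but in fact your two-line sandwich argument $\psi_\xi(\lambda)\le\lambda u(\xi)$ and $\liminf\ge u(\xi)$ already settles it cleanly for \emph{every} $\xi\in\bar\Delta$, with no need to worry separately about lower-dimensional faces or $d\sigma$-null sets; you can drop the hedging.
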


\section{\bf Estimates of convex functions}\label{Sec-Estimates}

Let $o$ be the center of $\Delta$. Choose coordinate system $\xi_i$ such that $\xi(o)=0.$ Set
$$\mathcal Q=\left\{u\in\mathcal C_{*}^{P}| u\text{ is lower semi-continuous}, \int_{\partial \Delta }u d\sigma =1, u(o)=\inf_{\bar{\Delta}}u=0.\right\}.$$

Let $D_{2a_{o}}(0)$ be a ball in $\Delta.$ By Lemma 5.2.3 in \cite{D1}, there exists a constant $C_1>0$ such that for any $u\in\mathcal Q$,
\begin{equation}
|u|_{L^{\infty}(D_{a_{o}}(0))}\leq C_1.
\end{equation}

Let $f$ be the Legendre transformation of $u$, that is,
\begin{equation}\label{leg_tran}
f(x)=\sup _{\xi\in\bar{\Delta }}(\langle x,\xi\rangle-u (\xi)),\;\;x\in\mathbb R^n.
\end{equation}
It is well known that $f$ is a lower semi-continuous convex function in $\mathbb R^n$. Denote by $u^*=f.$
\begin{remark}
We can extend $u$ to be defined on $(\mathbb R^n)^*$ by defining
$$u|_{(\mathbb R^n)^*\setminus \bar{\Delta}}=+\infty.$$
Then the Legendre transformation of $u$ can be defined by
\begin{equation}\label{leg_tran-infty}
f(x)=\sup _{\xi\in(\mathbb R^n)^* }(\langle x,\xi\rangle-u(\xi)).
\end{equation}
It is easy to see that $f$ in \eqref{leg_tran} is the same as in \eqref{leg_tran-infty}.
\end{remark}
Since $u$ is convex, lower semi-continuous and $u(o)<\infty$, by Theorem 6.1.2 in \cite{N-P},  $u$ is the Legendre transformation of $f$, that is, $u^{**}=u$ and
\begin{equation}\label{leg_tran-f}
u(\xi)=\sup _{x\in \mathbb R^n } (\langle\xi, x\rangle-f (x) ) .
\end{equation}

Denotes by $Df$ the normal map (or sub-differential) of the convex function $f$, that is, for $p\in\mathbb R^n$,
 $$
Df(p):=\{q\in(\mathbb R^{n})^*\;| \;f(x)\geq \langle q,(x-p)\rangle+f(p)\text { for all } x \in \mathbb R^n\}.
$$
Similarly, we denotes by $D u$ the normal map of $u$, i.e., 
 $$
Du(q):=\{p\in\mathbb R^{n}\;| \;u(\xi)\geq\langle p,(\xi-q)\rangle+u(q)\text { for all } \xi \in (\mathbb R^{n})^*\}.
$$

Since $u(\xi)\geq 0$ and $\bar\Delta$ is bounded, for all $x\in\mathbb R^n$, we have $ f(x)=\sup _{\xi\in \bar\Delta}(\langle x,\xi\rangle  -u (\xi))<+\infty$. So $Df(p)\neq \emptyset,\forall p\in \mathbb R^n.$
For any $p\in \mathbb{R}^n$ and $q\in Df(p)$, we denote
\begin{equation}
\label{normal_sect}
S_{f}(p,q,\epsilon):=\{x\in \mathbb{R}^n\;|\;f(x)\leq f(p)+\langle q, (x-p)\rangle+\epsilon\}.
 \end{equation}

We need the following lemma, see \cite{N-P} Proposition 6.1.1 for a proof.
\begin{lemma} \label{normal_m}
For any $p\in \mathbb R^n,$ $q\in Df(p)$ if and only if
\begin{equation}\label{eqn-3.6}
u(q)= \langle q, p\rangle -f (p).
\end{equation}
Similarly, $p\in Du(q)$ is also equivalent to \eqref{eqn-3.6}.
\end{lemma}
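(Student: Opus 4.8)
The plan is to recognize (\ref{eqn-3.6}) as the equality case of the Fenchel--Young inequality and to prove both implications directly from the definition of the Legendre transform, without any appeal to differentiability. First I would record the Fenchel--Young inequality itself: since $f$ is the Legendre transform of $u$ (equivalently $f=u^*$ and $u=f^*$), the defining supremum $u(q)=\sup_{x}(\langle q,x\rangle-f(x))$ specialized to the point $x=p$ gives
$$u(q)\geq \langle q,p\rangle-f(p)\qquad\text{for all } p,q,$$
so that (\ref{eqn-3.6}) is exactly the assertion that this supremum is attained at $x=p$.

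For the forward implication, I would suppose $q\in Df(p)$, i.e. $f(x)\geq f(p)+\langle q,x-p\rangle$ for every $x$. Rearranging this subgradient inequality yields $\langle q,x\rangle-f(x)\leq \langle q,p\rangle-f(p)$ for all $x$; taking the supremum over $x$ gives $u(q)\leq \langle q,p\rangle-f(p)$, which combined with Fenchel--Young forces the equality (\ref{eqn-3.6}). Conversely, if (\ref{eqn-3.6}) holds, then for every $x$ the definition of the supremum gives $\langle q,x\rangle-f(x)\leq u(q)=\langle q,p\rangle-f(p)$, and rearranging returns the subgradient inequality $f(x)\geq f(p)+\langle q,x-p\rangle$, i.e. $q\in Df(p)$.

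The symmetric assertion --- that $p\in Du(q)$ is also equivalent to (\ref{eqn-3.6}) --- I would obtain by running the identical argument with the roles of $f$ and $u$ interchanged. This is legitimate precisely because of the biconjugacy $u^{**}=u$ already established via Theorem 6.1.2 in \cite{N-P}, together with the finiteness $f(x)<+\infty$ for all $x$ noted just above: these guarantee that $u=f^*$ and $f=u^*$ are genuine Fenchel conjugates of one another. Carrying out the computation for $f=u^*$ shows $p\in Du(q)$ if and only if $f(p)=\langle p,q\rangle-u(q)$, which is merely (\ref{eqn-3.6}) rewritten. There is no genuine obstacle in this lemma; the only point requiring care is the bookkeeping that ensures all the suprema in play are finite and attained, which is exactly what the lower semi-continuity, convexity, and the normalization $u(o)<\infty$ supply.
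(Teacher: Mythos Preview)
Your argument is correct and is precisely the standard Fenchel--Young equality characterization of subgradients. The paper itself does not give a proof of this lemma at all; it simply refers the reader to \cite{N-P}, Proposition~6.1.1. Your write-up is essentially what that proposition contains, so there is no substantive difference in approach---you have just unpacked the cited reference.
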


By  \eqref{leg_tran} and $u\geq 0$, we have $f(x)\geq 0-u(0)=0$ and $f(0)=\sup_{\xi\in \bar\Delta}(-u(\xi))=0.$ Obviously, $f(0)+u(0)=0$ and $0\in D f(0)$.

\v
We denote $S_{f}(0,0,h)$ by $S_{f}(0,h)$.
  Note that for any $e\in S^{n-1}(1)$ and $t>0$,
$$
f(te) \geq te\cdot a_oe -u(a_oe) \geq a_ot-C_1.
$$
So we have
\begin{lemma}
For any $h>0,$ $\overline{S_{f}(0,h)}$ is compact.
\end{lemma}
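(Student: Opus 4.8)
The plan is to reduce the statement to a boundedness estimate: since the closure of any bounded subset of $\mathbb{R}^n$ is compact by Heine--Borel, it suffices to prove that $S_{f}(0,h)$ is bounded.

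First I would unwind the definition \eqref{normal_sect}. Taking $p=0$ and $q=0\in Df(0)$ and recalling that $f(0)=0$, the abbreviation $S_{f}(0,h)=S_{f}(0,0,h)$ is precisely the $h$-sublevel set
$$
S_{f}(0,h)=\{x\in\mathbb{R}^n : f(x)\leq h\}.
$$

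The heart of the argument is the uniform coercivity estimate already displayed just before the statement. For every unit vector $e\in S^{n-1}(1)$ the point $a_{o}e$ lies in $\overline{D_{a_{o}}(0)}\subset\bar\Delta$, so the $L^{\infty}$-bound $|u|_{L^{\infty}(D_{a_{o}}(0))}\leq C_1$ together with the definition \eqref{leg_tran} of the Legendre transform yields, for all $t>0$,
$$
f(te)\geq \langle te, a_{o}e\rangle - u(a_{o}e)\geq a_{o}t - C_1.
$$
The decisive feature is that $C_1$ does not depend on the direction $e$; this is exactly what the $L^{\infty}$-estimate on the fixed ball $D_{a_{o}}(0)$ provides, and it is what allows a pointwise lower bound to upgrade into a global radius bound.

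Finally I would test an arbitrary nonzero point $x\in S_{f}(0,h)$ by writing $x=te$ with $t=|x|$ and $e=x/|x|$. The estimate gives $a_{o}|x|-C_1\leq f(x)\leq h$, whence $|x|\leq (h+C_1)/a_{o}$. Thus $S_{f}(0,h)$ is contained in the closed ball of radius $(h+C_1)/a_{o}$ centred at the origin, so it is bounded, and therefore $\overline{S_{f}(0,h)}$ is compact. I do not anticipate a genuine obstacle here; the only point meriting attention is the uniformity of the constant $C_1$ across all directions $e$, which is the sole ingredient needed to pass from the directional bound to the global radius bound on the sublevel set.
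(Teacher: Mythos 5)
Your proof is correct and follows the same route as the paper: the paper's entire argument for this lemma is the displayed coercivity estimate $f(te)\geq a_{o}t-C_{1}$ immediately preceding the statement, from which boundedness of the sublevel set and hence compactness of its closure follow. You have merely made explicit the details the paper leaves implicit (that $S_{f}(0,h)$ is the sublevel set $\{f\leq h\}$ since $f(0)=0$ and $0\in Df(0)$, the uniformity of $C_{1}$ in the direction $e$, and the Heine--Borel conclusion), all of which are accurate.
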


For $ Df(\mathbb{R}^n)$, we can prove that
\begin{lemma} \label{im_normal}
$\Delta\subset Df(\mathbb{R}^n)\subset\bar\Delta$.
\end{lemma}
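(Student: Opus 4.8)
The plan is to establish the two inclusions separately, and in both cases to translate the set-membership ``$q\in Df(p)$'' into the conjugate identity of Lemma \ref{normal_m}, namely that $q\in Df(p)$ holds exactly when $u(q)=\langle q,p\rangle-f(p)$, which in turn is equivalent to $p\in Du(q)$. This equivalence is the only real tool needed; the rest is a matter of feeding it the correct finiteness information about $u$.

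For the upper inclusion $Df(\mathbb R^n)\subset\bar\Delta$, I would fix any $p\in\mathbb R^n$ and any $q\in Df(p)$. By Lemma \ref{normal_m} this forces $u(q)=\langle q,p\rangle-f(p)$, and since $f(p)<+\infty$ was already established, the right-hand side is finite, so $u(q)<+\infty$. But $u$ was extended by $u|_{(\mathbb R^n)^*\setminus\bar\Delta}=+\infty$, so finiteness of $u(q)$ forces $q\in\bar\Delta$. As $p,q$ were arbitrary, this yields $Df(\mathbb R^n)\subset\bar\Delta$. This direction is essentially immediate once the $+\infty$-extension of $u$ is in place.

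For the lower inclusion $\Delta\subset Df(\mathbb R^n)$, I would fix $\xi_0\in\Delta$ and aim to produce some $p$ with $\xi_0\in Df(p)$. By the equivalence in Lemma \ref{normal_m} it suffices to find $p$ with $p\in Du(\xi_0)$, i.e.\ to show $Du(\xi_0)\neq\emptyset$. Here the structural hypothesis on $u$ is what matters: since $u\in\mathcal C_\ast$, it is a locally uniform limit in $\Delta$ of convex functions, hence $u$ is finite and convex on the open set $\Delta$. A finite convex function on an open set admits a supporting affine function at every point, so $Du(\xi_0)\neq\emptyset$. Choosing any $p\in Du(\xi_0)$ and reading Lemma \ref{normal_m} in the reverse direction gives $\xi_0\in Df(p)\subset Df(\mathbb R^n)$.

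The single point that requires care is the nonemptiness of $Du(\xi_0)$, which is exactly where finiteness of $u$ on the \emph{whole} open polytope $\Delta$ (not merely near the center $o$, where the earlier $L^\infty$ bound applies) enters; this global-on-$\Delta$ finiteness is supplied precisely by the definition of $\mathcal C_\ast$ through local uniform convergence, and without it a convex $u$ could take the value $+\infty$ on part of $\Delta$ and the inclusion could fail. Boundary points are deliberately left out of the lower inclusion for the same reason, since $u$ may blow up or have empty subdifferential on $\partial\Delta$. Beyond this, I expect no serious obstacle other than keeping the two dual characterizations of the normal maps in Lemma \ref{normal_m} straight.
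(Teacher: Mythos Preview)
Your proposal is correct and follows essentially the same approach as the paper: both inclusions are reduced via Lemma \ref{normal_m} to finiteness information about $u$, with the upper inclusion using $u=+\infty$ off $\bar\Delta$ and the lower inclusion using that $u$ is finite (hence subdifferentiable) on the open polytope $\Delta$. The paper phrases the lower inclusion as a contradiction and asserts ``since $u(q)<+\infty$, there exists $p\in Du(q)$'' without elaboration, whereas you supply the reason (local uniform convergence in the definition of $\mathcal C_\ast$), but the argument is the same.
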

\begin{proof}
Since $u(\xi)=+\infty$, for all $\xi\in(\mathbb R^n)^*\setminus\bar\Delta$, by Lemma \ref{normal_m}, we have $Df(\mathbb{R}^n)\subset\bar\Delta$, so we only need to prove that $\Delta\subset Df(\mathbb{R}^n)$. If it is not true,  there exists a point $q\in \Delta\setminus Df(\mathbb{R}^n) $. Since $u(q)<+\infty$, there exists  $p\in Du(q)$, it follows from Lemma \ref{normal_m} that $q\in D f (p)$. We get a contradiction.
\end{proof}

\begin{theorem}\label{theorem_stab}
Assume that $(\Delta,A)$ is relatively $\widehat K$-polystable. Let $u\in \mathcal Q$ and  $\mathcal L_{A}(u)=0$.  Then
\begin{equation}
u\in L^{\infty}(\bar{\Delta}).
\end{equation}
\end{theorem}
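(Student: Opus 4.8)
The plan is to recast boundedness of $u$ as a finiteness statement at the vertices of $\Delta$, to read this off the growth of the Legendre transform $f=u^{*}$, and then to obstruct any failure of finiteness by playing the normalization $\int_{\partial\Delta}u\,d\sigma=1$ and the equality $\mathcal L_{A}(u)=0$ against the strict part of relative $\widehat K$-polystability.

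First I would reduce the statement. Since $u\ge 0$ is convex on the polytope $\bar\Delta$, for any $\xi=\sum_i\lambda_i v_i$ written as a convex combination of vertices $v_i$ one has $u(\xi)\le\sum_i\lambda_i u(v_i)\le\max_i u(v_i)$; hence $u\in L^{\infty}(\bar\Delta)$ as soon as $u$ is finite at every vertex of $\Delta$. For a vertex $q^{*}$, convexity and lower semicontinuity give $u(q^{*})=\lim_{t\to 1^{-}}u(tq^{*})$ along the segment $t\mapsto tq^{*}$ issuing from the center $o$ (where $\xi(o)=0$); and for $t<1$ we have $tq^{*}\in\Delta\subset Df(\mathbb{R}^{n})$ by Lemma \ref{im_normal}, so by Lemma \ref{normal_m} there is $x(t)$ with $u(tq^{*})=\langle tq^{*},x(t)\rangle-f(x(t))$. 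Thus $u(q^{*})<+\infty$ is equivalent to $f(x)\ge\langle q^{*},x\rangle-C$ for some constant $C$ and all $x$, i.e.\ to the points $x(t)$ not escaping to infinity too fast inside the normal cone of $\bar\Delta$ at $q^{*}$ as $t\to 1$. The whole theorem is therefore equivalent to ruling out this escape at each vertex.

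Next I would build bounded convex competitors by truncating $f$. For $R>0$ put $v_{R}(\xi)=\sup_{x\in\overline{S_{f}(0,R)}}\left(\langle x,\xi\rangle-f(x)\right)$. Because $\overline{S_{f}(0,R)}$ is compact (the lemma preceding Lemma \ref{im_normal}), each $v_{R}$ is a genuinely bounded convex function on $\bar\Delta$ with $0\le v_{R}\le u$ and $v_{R}\nearrow u$ on $\Delta$; moreover $v_{R}$ is non-linear once $R$ is large, since $u$ is non-linear (its boundary integral is $1$). Relative $\widehat K$-polystability then forces $\mathcal L_{A}(v_{R})>0$ for all large $R$, while the approximation statement of Lemma \ref{lemma_1.2.12}, together with the monotone convergence $v_{R}\nearrow u$ on $\partial\Delta$ and in $\Delta$, yields $\mathcal L_{A}(v_{R})\to\mathcal L_{A}(u)=0$.

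The hard part is that this convergence alone is consistent with a strictly positive sequence $\mathcal L_{A}(v_{R})$ decreasing to $0$, and so does not by itself preclude $\sup_{\bar\Delta}u=+\infty$. What is really needed is a quantitative, coercive lower bound: I would show that an escape at $q^{*}$ forces the boundary values of $v_{R}$ to grow in a full neighborhood of $q^{*}$ in $\partial\Delta$, and hence forces a definite deficit in $\mathcal L_{A}(v_{R})$ that increases with $\sup_{\bar\Delta}v_{R}$, so that $\sup_{\bar\Delta}v_{R}\to+\infty$ would contradict $\mathcal L_{A}(v_{R})\to 0$. Establishing this estimate—converting the local blow-up of $u$ at $q^{*}$, measured through the escape of the sections $\overline{S_{f}(0,R)}$ and the constraint $\int_{\partial\Delta}u\,d\sigma=1$, into a quantitative lower bound for $\mathcal L_{A}$ via polystability—is the main obstacle. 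This is where I expect the interior control $|u|_{L^{\infty}(D_{a_{o}}(0))}\le C_{1}$ and the facet-by-facet structure of $d\sigma$ to be used, localizing the estimate near $q^{*}$ and pinning down how much boundary mass can concentrate there; once the sign and size of the limiting deficit are identified, the contradiction closes and $u\in L^{\infty}(\bar\Delta)$ follows.
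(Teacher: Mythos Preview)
Your truncation $v_R$ coincides with the paper's $u_h$, so the construction is right; the gap is in how you propose to close the contradiction. You look for a coercive lower bound on $\mathcal{L}_A(v_R)$ that grows with $\sup_{\bar\Delta}v_R$ and would clash with $\mathcal{L}_A(v_R)\to 0$. But no such bound can exist: since $0\le v_R\le u$ and $\int_{\partial\Delta}u\,d\sigma=1$, the boundary integral of every $v_R$ is at most $1$, and the interior integral is likewise uniformly bounded by Donaldson's $L^1$ comparison; the blow-up of $v_R$ can occur only at vertices, which carry zero $d\sigma$-mass, so it contributes nothing to $\mathcal{L}_A$. The sequence $\mathcal{L}_A(v_R)$ is therefore trapped in a fixed bounded interval, and your proposed mechanism does not produce a contradiction.

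The paper's argument runs in the opposite direction and bypasses any limiting considerations. Set $w_h=u-u_h\ge 0$; it is supported on the shell $\bar\Delta\setminus W_h$ where $W_h=Df(\overline{S_f(0,h)})$, and one first checks (Lemmas~\ref{int_oq}--\ref{est_u_h}) that along each ray from $o$ this shell is a single segment on which $u_h$ is affine. Using polar coordinates centered at $o$ (Lemma~\ref{cont_u}), one then estimates $\int_{\Delta}|A|\,w_h\,d\mu\le \max_{\bar\Delta}|A|\cdot\max_e(R_e-r_e)\cdot C\int_{\partial\Delta}w_h\,d\sigma$, so once the shell is thin enough the boundary term strictly dominates and $\mathcal{L}_A(u)-\mathcal{L}_A(u_h)>0$. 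Since $\mathcal{L}_A(u)=0$, this gives $\mathcal{L}_A(u_h)<0$ for a single large $h$; but $u_h$ is bounded and non-linear, so relative $\widehat K$-polystability forces $\mathcal{L}_A(u_h)\ge 0$, a contradiction. No vertex-by-vertex reduction and no growth estimate is needed: one fixed $h$ already delivers the strict sign.
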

\emph{Proof.}  We discuss two cases.

{\bf Case 1.} There exists a constant $h_{o}>0$ such that for any $h\geq h_{o}$, $D f(\overline{S_{f}(0,h)})=\bar{\Delta}. $

For any $q\in \bar \Delta$, there exists $p\in \overline{S_{f}(0,h)}$ such that $q\in Df(p)$.  By Lemma \ref{normal_m} we have
$$
u(q)=\langle q, p\rangle-f(p)\leq diam(\Delta) diam(\overline{S_{f}(0,h_o)}).$$
Then  we have
$$
|u|_{L^\infty(\bar{\Delta})}<+\infty.
$$
\v\n

{\bf Case 2.} For any $h>0$, we have
\begin{equation}\label{eqn_ff}
 \bar \Delta\setminus Df(\overline{S_{f}(0,h)})\neq \emptyset.
\end{equation}
Set  $W_{h}=Df(\overline{S_{f}(0,h)})$ and $B_{h}=Df(\partial\overline{S_{f}(0,h)}).$ It is well known that $Df(\overline{S_{f}(0,h)})$ is compact (see Lemma 1.1.3 in \cite{Gui}). Obviously,
$$B_h \subset W_{h},\;\;\;\;\bar\Delta\setminus  W_{h}\neq \emptyset,\;\;\;\forall h>0.$$

We need some characters of $B_h$ and $W_h$.
Note that  $B_h$ may not be the boundary of $W_h$. We have
\begin{lemma}
$\p W_h\subset B_h.$
\end{lemma}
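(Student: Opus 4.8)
\emph{Plan.} The idea is to translate membership in $W_h$ and $B_h$ into statements about the subdifferential set $Du(q)$ by Legendre duality, and then to detect boundary points of $W_h$ by an intermediate value argument on $Du(q_0)$, which is convex hence connected. By Lemma \ref{normal_m}, $q\in Df(p)$ is equivalent to $p\in Du(q)$, so
\[
W_h=\{q:\ Du(q)\cap\{f\le h\}\neq\emptyset\}=\Bigl\{q:\ \min_{p\in Du(q)}f(p)\le h\Bigr\},\qquad B_h=Df(\{f=h\}).
\]
Since $f$ is a finite continuous convex function with $\inf f=f(0)=0<h$, the standard description of sublevel sets of convex functions gives $\partial\overline{S_f(0,h)}=\{f=h\}$. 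Consequently, to place a point $q$ in $B_h$ it suffices to exhibit some $p\in Du(q)$ with $f(p)=h$. Recording this reformulation, together with the already established compactness of $W_h$, would be the first step.

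Next I would fix $q_0\in\partial W_h$. As $W_h$ is compact (hence closed), $q_0\in W_h$, so there is $p_0\in Du(q_0)$ with $f(p_0)\le h$; equivalently $\min_{p\in Du(q_0)}f(p)\le h$. The heart of the argument is to establish the complementary inequality $\max_{p\in Du(q_0)}f(p)\ge h$. I would argue by contradiction, assuming $f<h$ on all of $Du(q_0)$. If $q_0\in\partial\Delta$ this is impossible: the normal cone of $\bar\Delta$ at $q_0$ is a nontrivial recession direction of $Du(q_0)=\partial u(q_0)$, and $f\to\infty$ along it by the coercivity estimate $f(te)\ge a_ot-C_1$. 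If instead $q_0\in\Delta$, then $Du(q_0)$ is compact and contained in the open set $\{f<h\}$; using that the normal map $q\mapsto Du(q)$ is locally bounded and upper semicontinuous on $\Delta$, I obtain a neighborhood $V\subset\Delta$ of $q_0$ with $Du(q)\subset\{f<h\}$ for all $q\in V$, whence $V\subset W_h$ and $q_0\in\mathrm{int}\,W_h$, contradicting $q_0\in\partial W_h$.

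With both inequalities in hand, $\min_{Du(q_0)}f\le h\le\max_{Du(q_0)}f$, the intermediate value theorem applied to the continuous function $f$ on the connected set $Du(q_0)$ yields $p^*\in Du(q_0)$ with $f(p^*)=h$. Then $p^*\in\partial\overline{S_f(0,h)}$ and $q_0\in Df(p^*)$, so $q_0\in B_h$, which is the assertion.

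I expect the main obstacle to be the middle step, namely pinning down $\max_{p\in Du(q_0)}f(p)\ge h$: this requires the local boundedness and upper semicontinuity of the subdifferential map on the open set $\Delta$, together with the separate coercivity-based treatment of boundary points $q_0\in\partial\Delta$, where $Du(q_0)$ is unbounded. The other point demanding care is the clean identification $\partial\overline{S_f(0,h)}=\{f=h\}$, valid because $h>\inf f$; once these two facts are secured the rest is a short duality-plus-connectedness argument.
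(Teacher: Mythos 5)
Your proof is correct, but it runs along a genuinely different track from the paper's. The paper fixes $q\in\partial W_h$ with $Du(q)\cap\partial\overline{S_f(0,h)}=\emptyset$, picks $p\in Du(q)$, and analyzes the contact set $S_f(p,q,0)$ of the supporting hyperplane: if it meets $\partial S_f(0,h)$, the same affine function supports $f$ at a point of $\partial\overline{S_f(0,h)}$, so $q\in B_h$; if not, a compactly contained section $S_f(p,q,\epsilon)\subset\subset S_f(0,h)$ yields, by the Monge--Amp\`ere section argument (Guti\'errez), a whole ball of slopes $D_\delta(q)\subset Df(\overline{S_f(0,h)})=W_h$, making $q$ interior to $W_h$ --- a contradiction. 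You instead work entirely on the dual side: your identification $\partial\overline{S_f(0,h)}=\{f=h\}$ (valid since $h>\inf f=f(0)=0$ and $f$ is finite, continuous and coercive by the paper's estimate $f(te)\geq a_ot-C_1$) reduces the lemma to producing $p^*\in Du(q_0)$ with $f(p^*)=h$, which the intermediate value theorem delivers on the convex, hence connected, set $Du(q_0)$ once $\min f\leq h\leq \sup f$ holds there. Your key step --- that $f<h$ on all of $Du(q_0)$ forces $q_0\in\mathrm{int}\,W_h$ --- replaces the section lemma by the closed-graph-plus-local-boundedness (upper semicontinuity) of the subdifferential map, and your separate treatment of $q_0\in\partial\Delta$ via the normal cone of $\bar\Delta$ (a recession direction of $Du(q_0)$, precisely because of the convention $u\equiv+\infty$ off $\bar\Delta$) is needed since upper semicontinuity of $Du$ is only available at interior points; the paper needs no such dichotomy, as for $q\in\partial\Delta$ the conclusion $D_\delta(q)\subset Df(\mathbb{R}^n)\subset\bar\Delta$ is absurd on its own. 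What your route buys is elementary convex analysis and an IVT/connectedness argument in place of the contact-set geometry and section machinery; the cost is the interior/boundary case split and the explicit use of $\partial W_h\subset W_h$ (compactness of $W_h$) and of the extension of $u$ by $+\infty$, all of which you invoke correctly.
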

\begin{proof}
 Assume that the lemma is not true. Then there exists $q\in \p W_h$ such that $D u(q)\cap \partial\overline{S_{f}(0,h)}=\emptyset$.
Obviously, $D u(q) \subset\subset   S_{f}(0,h)$.  It is well known that $D u(q)$ is a closed convex set. Take a point $p\in D u(q).$  By Lemma \ref{normal_m}, $q\in Df(p).$ If $S_{f}(p,q,0) \cap \p S_{f}(0,h)\neq \emptyset,$ let $p_1\in S_{f}(p,q,0) \cap \p S_{f}(0,h),$ we have
$$
f(p_1)=\langle q,(p_1-p)\rangle +f(p).
$$
Combining this and  the definition of normal map, we have
$$
f(x)\geq \langle q, (x-p)\rangle+f(p)=\langle q,(x-p_1)\rangle+f(p_1).
$$
Then $q\in B_h.$ We get a contradiction. Suppose that  $S_{f}(p,q,0) \cap \p S_{f}(0,h)=\emptyset$, since $S_{f}(p,q,0)$ is a closed set, $\p S_{f}(0,h)$ is a compact set, there exists a constant $\epsilon>0$ such that
$$S_{f}(p,q,\epsilon)\subset\subset S_{f}(0,h).$$
By the convexity we have $$D_\delta(q)\subset D f(S_{f}(p,q,\epsilon)) \subset D f(S_{f}(0,h)),$$
for some constant $\delta>0.$
In particular, $q$ is an interior point of $W_h.$ We get a contradiction. The lemma is proved.
\end{proof}

By Lemma \ref{normal_m}, we have
$$
\partial\overline{S_{f}(0,h)} \subset Du(B_h).
$$

\begin{lemma}\label{int_oq}
 Let $oq$ be a line segment with $q\in \partial \Delta$. Then $oq\cap W_h$ is  connected and $oq\cap (\bar\Delta\setminus W_h)$ is connected.
\end{lemma}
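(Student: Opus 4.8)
The plan is to restrict the whole picture to the line through $o$ and $q$ and thereby reduce the two-dimensional geometry of $W_h$ to the monotonicity of a single convex function of one real variable. Parametrize the segment $oq$ by $\xi(t)=tq$, $t\in[0,1]$, using $\xi(o)=0$, and set $g(t)=u(tq)$. Then $g$ is convex on $[0,1]$ with $g(0)=u(o)=0$, and for $t\in[0,1)$ the point $tq$ lies in the open polytope $\Delta$, so $u$ is finite there and the normal map $Du(tq)$ is a nonempty compact convex set whose extreme slopes in the $q$-direction are exactly the one-sided derivatives, i.e. $g'_-(t)=\min_{p\in Du(tq)}\langle q,p\rangle$ and $g'_+(t)=\max_{p\in Du(tq)}\langle q,p\rangle$ (the directional-derivative formula for convex functions).

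First I would translate membership in $W_h$ into an inequality for $g$. By Lemma \ref{normal_m}, for $p\in\mathbb R^n$ we have $tq\in Df(p)\iff p\in Du(tq)\iff f(p)=\langle tq,p\rangle-u(tq)$. Hence $tq\in W_h=Df(\overline{S_{f}(0,h)})$ holds precisely when some $p\in Du(tq)$ satisfies $f(p)=\langle tq,p\rangle-u(tq)\le h$. Minimizing the affine map $p\mapsto\langle tq,p\rangle=t\langle q,p\rangle$ over the compact convex set $Du(tq)$ yields the clean criterion
$$tq\in W_h\iff \psi(t):=t\,g'_-(t)-g(t)\le h.$$

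The conceptual core is that $\psi$ is non-decreasing on $[0,1)$. Where $g$ is twice differentiable this is immediate from $\psi'(t)=t\,g''(t)\ge 0$; in general I would argue directly. For $t_1<t_2$, write $s_i=g'_-(t_i)$; monotonicity of subgradients gives $s_1\le s_2$, while convexity gives $g(t_2)-g(t_1)\le s_2(t_2-t_1)$, so
$$\psi(t_2)-\psi(t_1)=t_2 s_2-t_1 s_1-\big(g(t_2)-g(t_1)\big)\ge t_2 s_2-t_1 s_1-s_2(t_2-t_1)=t_1(s_2-s_1)\ge 0.$$
Consequently $\{t\in[0,1):\psi(t)\le h\}$ is an initial subinterval of $[0,1)$, which shows that $oq\cap W_h$ is connected and that its complement $oq\cap(\bar\Delta\setminus W_h)$ is connected. (As a sanity check, $\psi(0)=-g(0)=0\le h$ recovers $o\in W_h$, consistent with $0\in Df(0)$ noted before Lemma \ref{im_normal}.)

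The only points needing extra care are at the two ends of the reduction. At the endpoint $t=1$, where $q\in\partial\Delta$ and $Du(q)$ may be empty, I would not rely on the criterion above but instead invoke that $W_h$ is compact (Lemma 1.1.3 in \cite{Gui}), so $\{t\in[0,1]:tq\in W_h\}$ is closed and the interval structure already established on $[0,1)$ extends to $[0,1]$. I expect the main obstacle to be precisely this bookkeeping together with the careful justification of the identity $\min_{p\in Du(tq)}\langle q,p\rangle=g'_-(t)$ in the non-smooth case (the one-variable chain rule for convex functions and the compactness of $Du(tq)$ at interior points); once these are in place, the monotonicity of $\psi$ is a short convexity computation and the connectivity statements follow at once.
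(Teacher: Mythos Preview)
Your approach is correct and genuinely different from the paper's. The paper argues geometrically: it lifts to the graph $\Gamma_u\subset\bar\Delta\times\mathbb R$, considers the point $P=(0,-h)$, and shows that for any $q_1,q_2\in oq\cap B_h$ the tangent lines $PQ_1$ and $PQ_2$ must coincide, forcing $u$ to be affine on $q_1q_2$; it then combines this with the previously proved inclusion $\partial W_h\subset B_h$ to deduce that $oq\cap W_h$ is a segment. Your route bypasses $B_h$ entirely: the one-variable reduction to $\psi(t)=t\,g'_-(t)-g(t)$ and its monotonicity is a cleaner, purely analytic argument that does not rely on the auxiliary lemma about $\partial W_h$. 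What the paper's approach buys is the extra information that $u$ is affine on the portion of $oq$ lying in $B_h$ (used later in Lemma~\ref{est_u_h}); your approach buys brevity and avoids the geometric picture.

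One point needs tightening. Your closing remark that compactness of $W_h$ makes $\{t\in[0,1]:tq\in W_h\}$ closed, and that this ``extends the interval structure to $[0,1]$'', is not sufficient: a closed subset of $[0,1]$ whose intersection with $[0,1)$ is $[0,a]$ could still be $[0,a]\cup\{1\}$. The fix is immediate with your own tools. If $q\in W_h$ then there is $p\in Du(q)$ with $f(p)\le h$; since $\langle q,p\rangle\ge g'_-(1)$ (from $p\in Du(q)$ restricted to the ray), one gets $\psi(1)=g'_-(1)-g(1)\le\langle q,p\rangle-u(q)=f(p)\le h$, and your monotonicity (which extends to $t=1$ by the same inequality) gives $\psi(t)\le h$ for all $t<1$, i.e.\ $a=1$. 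Contrapositively, $a<1$ forces $q\notin W_h$, so in either case $oq\cap W_h$ is a single interval.
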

\begin{proof} First, we prove that $oq\cap B_h$ is connected.
Let $(\xi,\xi_{n+1})$ be the coordinates of $\bar\Delta\times \mathbb R.$ Denote by $\Gamma_{u}$ the graph of the function $u$ in $\bar\Delta\times \mathbb R.$
 For any $q_1,q_2\in oq\cap B_h,$ set $Q_i=(q_i,u(q_i)),i=1,2.$ Denote $P=(0,-h).$
Let $p_1,p_2\in \p S_{f}(0,h)$ be the points with $q_{i}\in D f(p_i),i=1,2.$ Consider the supporting hyperplane $l_{q_{i}}$ defined by the function
$$
l_{q_{i}}: \quad \xi_{n+1}=\langle p_i, (\xi-q_i)\rangle+u(q_i),\;\;\;i=1,2.
$$
Since $\langle p_i, q_i\rangle=u(q_i)+f(p_i)$ and $f(p_i)=h$, we have $P\in l_{q_{i}}.$ Obviously, $Q_i\in l_{q_i}.$
Since the line $PQ_i$ is tangent to  $\Gamma_{u}$ at $Q_i$, by the convexity of $u$, we have $P,Q_1,Q_2$ are in the same line, $u|_{q_1q_2}$ is affine, and $l_{q_{1}}=l_{q_{2}}$. So $\langle p_{1},\xi\rangle|_{oq}=\langle p_2,\xi\rangle|_{oq}$. If we denote by $l_{q}$ the hyperplane which is defined by $\xi_{n+1}=\langle p_1,(\xi-q)\rangle+u(q)$, $l_{q}$ is a supporting hyperplane of the graph of $u$ at every point between $q_1$ and $q_2$. Thus, $p_1\in Du(tq_1+(1-t)q_2)$, for all $0\leq t\leq 1$. So the segment $q_1q_2\subset B_{h}$. We have proved $oq\cap B_h$ is connected. Next, we prove that $oq\cap W_h$ is a line segment. In fact, if this is not true, there exists a point $q_0$ such that $q_0\notin W_h$, $W_h\cap oq_0\neq\emptyset$ and $W_h\cap q_0q\neq\emptyset$. Then we have $B_h\cap oq_0\neq\emptyset$, $B_h\cap q_0q\neq\emptyset$. By the connectedness of $B_h\cap oq$, we have $q_0\in B_h$, which is a contradiction. so $oq\cap W_h$ is connected, $oq\cap (\bar\Delta\setminus W_h)$ is connected.
\end{proof}

\begin{remark}
We can describe $\bar\Delta$ and $\bar\Delta\setminus W_h$ in the polar coordinates. Let $F_i,i=1,\cdots,d$ be the facets of $\bar{\Delta}$. We can divide $\Delta$  into several cones with vertex $o$ and bases $F_i$. Denote them by $D_{F_1},\cdots,D_{F_d},$ The cones also divide $S^{n-1}(1)$ into $S_{F_i},i=1,\cdots,d.$ Then $D_{F_i}$ can been written as
$$
\{(r,e) : 0\leq r\leq R_e, e \in \overline{S_{F_i}}\}.
$$
Let $n_{F_i}$ be the prime normal vector of the facet $F_i.$ Then it is easy to see that
\begin{equation}\cos(e, n_{F_i})\geq c_o, \forall i=1,...,d
\end{equation}
for any $e\in S_{F_i}$, where $c_{o}=\frac{d(o,\partial\Delta)}{diam(\Delta)}$. For any $e\in S^{n-1}(1),$ let $q_{e}$ be the point on $\p \Delta$ with the direction $e.$ By Lemma \ref{int_oq} we know that $oq_{e}\cap W_{h}$ is a line segment. Let $r_{e}$ be the length of $oq_{e}\cap W_{h}$, $R_{e}$ be the length of $oq_{e}$. by Lemma \ref{im_normal} we have
\begin{equation}\label{rad_est}
\lim_{h\to\infty}r_{e}=R_{e}.
\end{equation}
\end{remark}
 \v\n

For any $h>0,$ we define  a functions
$$
u_h(\xi) =\sup_{q\in W_h} l_{q}(\xi),
$$
where $l_{q}$ denotes  the affine-linear function defining a
supporting hyperplane of the graph of $u$ at $q$.
Then $u_h$ is a convex function and $u_h|_{W_h}=u|_{W_h}.$

\begin{lemma}\label{est_u_h}
 Let $oq$ be a line segment with $q\in \partial \Delta$. Suppose that  $oq\cap W_h=oq_1$. Then $u$ is linear on $q_1q$, and for any $\xi\in q_1 q$,
\begin{equation}
u_h(\xi)=u_h(q_1)+\frac{|q_1\xi|}{|oq_1|}(u(q_1)+h).
\end{equation}
\end{lemma}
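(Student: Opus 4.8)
The plan is to recognise $u_h$ as a \emph{partial Legendre transform} of $f$ taken only over the sublevel body $\overline{S_{f}(0,h)}$, and then to read off its behaviour along the ray $oq$ from the support function of that body. Concretely, I would first record the identity
\[
u_h(\xi)=\sup_{p\in \overline{S_{f}(0,h)}}\bigl(\langle p,\xi\rangle-f(p)\bigr),\qquad \xi\in\bar\Delta .
\]
Indeed, for $w\in W_h=Df(\overline{S_{f}(0,h)})$ there is $p\in\overline{S_{f}(0,h)}$ with $w\in Df(p)$; by Lemma~\ref{normal_m} the affine function $\xi\mapsto\langle p,\xi\rangle-f(p)$ agrees with $u$ at $w$ and lies below $u$ everywhere, so it is exactly a supporting hyperplane $l_w$, and conversely each such $p$ gives a supporting hyperplane at the point $Df(p)\in W_h$. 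Hence the supremum defining $u_h$ coincides with the displayed one; since $u^{**}=u$, the supremum over \emph{all} $p$ would return $u$, which re-confirms $u_h\le u$ with equality on $W_h$.

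The key step is to show that for $\xi\notin W_h$ the supremum above is attained only on $\partial\overline{S_{f}(0,h)}=\{f=h\}$. The map $p\mapsto\langle p,\xi\rangle-f(p)$ is upper semicontinuous and concave, so it attains its maximum on the compact convex body $\overline{S_{f}(0,h)}$ (compactness was established in the lemma above). If a maximiser $p^\ast$ were interior, the first–order condition would give $\xi\in Df(p^\ast)$, i.e.\ $p^\ast\in Du(\xi)\cap\overline{S_{f}(0,h)}$; but $\xi\notin W_h$ means exactly $Du(\xi)\cap\overline{S_{f}(0,h)}=\emptyset$ (again Lemma~\ref{normal_m}), a contradiction. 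Thus every maximiser satisfies $f(p^\ast)=h$, and therefore
\[
u_h(\xi)=\sup_{p\in\overline{S_{f}(0,h)}}\langle p,\xi\rangle-h=\sigma(\xi)-h,\qquad \xi\notin W_h,
\]
where $\sigma(\xi):=\sup_{p\in\overline{S_{f}(0,h)}}\langle p,\xi\rangle$ is the support function of $\overline{S_{f}(0,h)}$.

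Finally I would invoke the positive one–homogeneity of $\sigma$. Writing the ray through $o=0$ with unit direction $e$, every $\xi=te\in q_1q$ lies outside $W_h$ (by hypothesis $oq\cap W_h=oq_1$; one may assume $q_1\neq q$, so $q_1\in\Delta$), whence $u_h(te)=\sigma(te)-h=t\,\sigma(e)-h$, which is affine in $t$ — this is the asserted linearity along $q_1q$ (of $u_h$; recall $u_h=u$ on $W_h$, in particular $u_h(q_1)=u(q_1)$). Letting $t\downarrow|oq_1|$ and using continuity of $u_h$ at the interior point $q_1$ gives $|oq_1|\,\sigma(e)-h=u_h(q_1)=u(q_1)$, i.e.\ $\sigma(e)=(u(q_1)+h)/|oq_1|$. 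Substituting and writing $|q_1\xi|=t-|oq_1|$ yields
\[
u_h(\xi)=u_h(q_1)+\bigl(t-|oq_1|\bigr)\sigma(e)=u_h(q_1)+\frac{|q_1\xi|}{|oq_1|}\bigl(u(q_1)+h\bigr),
\]
as claimed; the same computation identifies this affine function with the single supporting hyperplane $l_{q_1}$ coming from the boundary point $p_1\in\partial\overline{S_{f}(0,h)}$ with $q_1\in Df(p_1)$, which exists because $q_1\in\partial W_h\subset B_h$.

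The main obstacle is the middle step, namely showing that the maximiser jumps onto the level set $\{f=h\}$ the moment $\xi$ leaves $W_h$; this must be phrased through subdifferentials, since $f$ need not be differentiable, and it is precisely the reinterpretation of $u_h$ as a partial Legendre transform that makes it transparent. A secondary point to handle with care is the identification of the supremum of supporting hyperplanes with that partial transform when $u$ (equivalently $f$) is not differentiable, where one restricts to subgradients lying in $\overline{S_{f}(0,h)}$.
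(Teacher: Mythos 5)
Your proof is correct, and it takes a genuinely different route from the paper's. The paper argues by a direct two-sided affine comparison along the ray: since $q_1\in\partial W_h\subset B_h$, there is $p_1\in\partial\overline{S_{f}(0,h)}$ with $q_1\in Df(p_1)$, so the supporting hyperplane $l_{q_1}$ passes through $(0,-h)$ and its restriction to $oq$ is exactly the right-hand side of the formula, giving the lower bound on $u_h$; for the upper bound, any competing supporting function $l_{\xi'}$ with slope $x\in\overline{S_{f}(0,h)}$ satisfies $l_{\xi'}(0)=-f(x)\in[-h,0]$ and $l_{\xi'}(q_1)\le u(q_1)=l_{q_1}(q_1)$, and two affine functions on a line ordered one way at $o$ and the other way at $q_1$ remain ordered beyond $q_1$. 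You instead identify $u_h$ globally as the partial Legendre transform over $\{f\le h\}$, locate all maximizers on the level set $\{f=h\}$ when $\xi\notin W_h$ via the concavity/subdifferential argument (and your two-sided derivation of $u_h=\sigma-h$ there is complete: $\ge$ because $f\le h$ on the body, $\le$ from the maximizer location, where $\sigma$ is the support function of $\overline{S_{f}(0,h)}$), then conclude by one-homogeneity of $\sigma$ and continuity of the convex function $u_h$ at the interior point $q_1$. Your version is somewhat longer but buys more: it yields the closed form $u_h=\sigma-h$ on all of $\bar\Delta\setminus W_h$, hence linearity of $u_h$ along every ray from $o$ simultaneously, whereas the paper's argument is a pointwise two-line comparison tailored to one ray.

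Two remarks. First, your restricted reading of the definition of $u_h$ --- admitting only supporting hyperplanes whose slopes lie in $\overline{S_{f}(0,h)}$ --- is indeed what the paper intends: its own upper-bound step asserts exactly that slopes arising at points of $W_h$ lie in $\overline{S_{f}(0,h)}$, and under the unrestricted reading (all subgradients at all points of $W_h$) the stated formula can fail at a kink of $u$ on $\partial W_h$, so you were right to flag and resolve this point. Second, the clause ``$u$ is linear on $q_1q$'' in the statement should read ``$u_h$ is linear'': neither the paper's proof nor yours establishes linearity of $u$ itself (and it is false in general, e.g.\ for $u$ strictly convex near $\partial\Delta$); your parenthetical attributing the linearity to $u_h$, with $u_h(q_1)=u(q_1)$ since $q_1\in W_h$, is the correct reading and matches what the paper actually proves.
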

\begin{proof}
Since $q_1\in\partial W_h$, we have there exists $p_1\in\partial\overline{S_{f}(0,h)}$ such that $l_{q_{1}}:\xi_{n+1}=\langle p_1,(\xi-q_1)\rangle+u(q_1)$ is a hyperplane of the graph of $u$ at $q_1$, and $(0,-h)\in l_{q_{1}}$. When restrict $oq$, the equation of $l_{q_1}$ is $\xi_{n+1}=u_h(q_1)+\frac{|q_1\xi|}{|oq_1|}(u(q_1)+h)$, so $u_h(\xi)\geq u_h(q_1)+\frac{|q_1\xi|}{|oq_1|}(u(q_1)+h)$ for any $\xi\in q_1 q$.

On the other hand, for any $\xi'\in W_h$ and any supporting hyperplane $l_{\xi'}$ of the graph of $u$ at $\xi'$, there exists $x\in\overline{S_{f}(0,h)}$ such that the function of $l_{\xi'}$ is $\xi_{n+1}=\langle x, (\xi-\xi')\rangle+u(\xi')$. Since $u(\xi')+f(x)=\langle x,\xi'\rangle$ and $f(x)\leq h$, it follows that  $-h\leq l_{\xi'}(0)\leq 0.$ Since  $l_{\xi'}(q_{1})\leq u(q_1)=l_{q_{1}}(q_1)$, we have $l_{\xi'}(\xi)\leq l_{q_{1}}(\xi)$ for any $\xi\in q_1 q$. So $u_h(\xi)\leq u_h(q_1)+\frac{|q_1\xi|}{|oq_1|}(u(q_1)+h)$ for any $\xi\in q_1 q$. The lemma is proved.
\end{proof}

\begin{lemma}\label{cont_u}
Assume that  $\bar\Delta\setminus D f(\overline{S_{f}(0,h)})\neq \emptyset$ for any $h>0.$ Then
$$\mathcal L_{A}(u) >\mathcal L_{A}(u_h),$$
as $h$ large enough.
\end{lemma}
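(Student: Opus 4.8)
\emph{Plan.} The starting point is the pointwise comparison between $u$ and $u_h$. Each $l_q$ ($q\in W_h$) is a supporting hyperplane of the graph of $u$, so $l_q\le u$ on all of $\bar\Delta$; taking the supremum gives $u_h\le u$ everywhere, while $u_h=u$ on $W_h$ by construction. Hence $\psi_h:=u-u_h\ge 0$, and $\psi_h$ vanishes on $W_h$, so all the integrals below are effectively supported on $\bar\Delta\setminus W_h$. The basic identity to exploit is
$$
\mathcal L_A(u)-\mathcal L_A(u_h)=\int_{\partial\Delta}\psi_h\,d\sigma-\int_{\Delta}A\,\psi_h\,d\mu .
$$
I would show that for large $h$ the (nonnegative) boundary term strictly dominates the volume term in absolute value, which yields the strict inequality.

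The key structural input is Lemma \ref{est_u_h}: writing $\Delta$ in the polar coordinates of the Remark following Lemma \ref{int_oq}, along each ray $oq_e$ with $oq_e\cap W_h=[0,r_e]$ the function $\psi_h$ is \emph{affine} on $[r_e,R_e]$ and vanishes at $r_e$, so $\psi_h(re)=\frac{r-r_e}{R_e-r_e}\,\psi_h(q_e)$ for $r\in[r_e,R_e]$. Integrating in $r$ against $r^{n-1}\,dr$ and using $\frac{r-r_e}{R_e-r_e}\le 1$ gives $\int_{r_e}^{R_e}\psi_h(re)\,r^{n-1}\,dr\le (R_e-r_e)\,R_e^{\,n-1}\,\psi_h(q_e)$. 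Comparing with the boundary measure through the cone decomposition (where a solid angle $d\Omega$ at direction $e$ subtends facet measure a fixed constant times $R_e^{\,n-1}/\cos\angle(e,n_{F_i})$, and $\cos\angle(e,n_{F_i})\ge c_o$) produces a constant $C>0$, depending only on $\Delta$, with
$$
\int_{\Delta}\psi_h\,d\mu\le C\Big(\sup_{e}(R_e-r_e)\Big)\int_{\partial\Delta}\psi_h\,d\sigma .
$$
Since $r_e$ is nondecreasing in $h$ (as $\overline{S_f(0,h)}$, hence $W_h$, increases) and converges pointwise to $R_e$ by \eqref{rad_est}, a Dini-type argument on the compact sphere gives $\sup_{e}(R_e-r_e)\to 0$; establishing this uniform smallness together with the Jacobian bookkeeping is the main technical obstacle, though it is robust to the precise normalizing constants in $d\sigma$.

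It remains to see that the boundary term is strictly positive. On directions with $r_e<R_e$ I claim $\psi_h(q_e)>0$: otherwise $\psi_h$ would be affine, nonnegative, and zero at both endpoints of $[r_e,R_e]$, hence identically zero there, so $u$ agrees with the affine extension of the supporting hyperplane $l_{q_1}$ (with $p_1\in\partial\overline{S_f(0,h)}$, $q_1\in Df(p_1)$) all the way to $q_e$; then $l_{q_1}$ supports $u$ at $q_e$ as well, so $p_1\in Du(q_e)$ and, by Lemma \ref{normal_m}, $q_e\in Df(p_1)\subset W_h$, contradicting $q_e\notin W_h$. By \eqref{eqn_ff} the set $\bar\Delta\setminus W_h$ is nonempty and, being the complement of the compact set $W_h$ in the convex body $\bar\Delta$, has nonempty interior; the rays meeting this interior form a positive-measure set of directions with $r_e<R_e$, so $\int_{\partial\Delta}\psi_h\,d\sigma>0$. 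Finally, combining the two estimates,
$$
\mathcal L_A(u)-\mathcal L_A(u_h)\ge\Big(1-C\,\textstyle\sup_{\bar\Delta}|A|\,\sup_{e}(R_e-r_e)\Big)\int_{\partial\Delta}\psi_h\,d\sigma,
$$
and for $h$ large enough the bracket is positive, giving $\mathcal L_A(u)>\mathcal L_A(u_h)$. I expect the delicate points to be the uniform control of $R_e-r_e$ and the clean comparison of $d\mu$ with $d\sigma$ in polar coordinates; the strict positivity argument above is the other essential ingredient, and it is exactly where the hypothesis \eqref{eqn_ff} and Lemmas \ref{im_normal}, \ref{int_oq}, \ref{est_u_h} enter.
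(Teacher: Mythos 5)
Your proof is correct and takes essentially the same route as the paper's: the same decomposition $v_h=u-u_h$, the same polar-coordinate comparison bounding $\int_{\Delta}v_h\,d\mu$ by $C\sup_e(R_e-r_e)\int_{\partial\Delta}v_h\,d\sigma$ using $\cos(e,n_{F_i})\geq c_o$, smallness of $\sup_e(R_e-r_e)$ for $h$ large via \eqref{rad_est}, and strict positivity of the boundary term via the supporting hyperplane through $(0,-h)$ and Lemma \ref{normal_m} (the paper phrases this last step as a contradiction: a vanishing boundary integral forces $v_h\equiv 0$, hence $\bar\Delta\subset Df(\overline{S_f(0,h)})$, against \eqref{eqn_ff}). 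Two minor remarks: $\psi_h$ is only convex, not affine, on $[r_e,R_e]$ (the clause ``$u$ is linear on $q_1q$'' in Lemma \ref{est_u_h} is not what its proof actually establishes), but convexity alone gives both your chord bound and the endpoint-vanishing argument; and the uniformity $\sup_e(R_e-r_e)\to 0$, which you rightly flag and which the paper also invokes without proof from the pointwise statement \eqref{rad_est}, is obtained more safely than by Dini (the required semicontinuity of $r_e$ in $e$ is unclear) by noting that every compact $K\subset\Delta$, e.g.\ $K=(1-\epsilon)\bar\Delta$, lies in $W_h$ for $h$ large, since subgradients of $u$ on $K$ are bounded and $f$ is bounded on the set of such subgradients.
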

\begin{proof}
Let $oq$ be a ray with $q\in \partial \Delta$.  Note that $u\geq u_h$. Then we have $(u-u_h)|_{oq}$ is a non-negative lower semi-continuous convex function. Denote $v_h=u-u_h$, then
$$\mathcal L_{A}(u) -\mathcal L_{A}(u_h)=\int_{\p \Delta \cap (\bar{\Delta}\setminus W_h)} v_h d\sigma -\int_{\Delta\setminus W_h} Av_h d\mu. $$
We use the polar coordinates to calculate $\int_{\Delta\setminus W_h} v_h d\mu.$ At first, we calculate $\int_{D_{F_i}\cap (\Delta\setminus W_h)} v_h d\mu$ for any facet $F_i$. Let $d\Theta$ be the standard volume form of the sphere $S^{n-1}(1).$ Then
\begin{align} \nonumber
\int_{D_{F_i}\cap (\Delta\setminus W_h)} v_h d\mu&=\int_{S_{F_i}}\int_{r_{e}}^{R_{e}}r^{n-1}v_hdr d\Theta \\
&\leq \int_{S_{F_i}}R_{e}^{n-1}v_h(R_{e},e) d\Theta \nonumber \\
&\leq \max_{e\in S^{n-1}(1)} (R_{e}-r_{e})\int_{S_{F_i}}R_{e}^{n-1}v_h(R_{e},e) d\Theta. \label{est_int}
\end{align}
On the other hand, 
\begin{align}
\int_{F_i\cap (\bar\Delta\setminus W_h)} v_h d\sigma&=\frac{1}{|n_{F_i}|}\int_{S_{F_i}} v_h(R_{e},e)R^{n-1}_{e}\cos(n_{F_i},e)d\Theta\nonumber\\
&\geq \frac{c_o}{|n_{F_i}|}\int_{S_{F_i}} v_h(R_{e},e) R^{n-1}_{e}d\Theta.\label{est_bou}
\end{align}
By \eqref{rad_est} we can choose $h$ large enough such that
$$
\max_{e\in S^{n-1}(1)} (R_{e}-r_{e} ) \leq \frac{c_o}{2\max_{\bar\Delta}|A|\max_{i=1,...,d}|n_{F_i}|}.
$$
Then,
\begin{align}
\mathcal L_{A}(u) -\mathcal L_{A}(u_h)&\geq\sum_{i}\left(\left(\frac{c_o}{|n_{F_i}|}-\max_{\bar\Delta}|A|\max_{e\in S^{n-1}(1)} (R_{e}-r_{e})\right)\int_{S_{F_i}}R_{e}^{n-1}v_h(R_{e},e) d\Theta\right)\nonumber\\
&\geq\frac{c_o}{2\max_{i=1,...,d}|n_{F_i}|}\int_{S^{n-1}(1)}R_{e}^{n-1}v_h(R_{e},e) d\Theta.\nonumber
\end{align}
Since $v_h\geq 0$ and $v_h$ is lower semi-continuous, we have that $\int_{S^{n-1}(1)} v_h(R_{e},e) R^{n-1}_{e}d\Theta >0$. In fact, if $\int_{S^{n-1}(1)} v_h(R_{e},e) R^{n-1}_{e}d\Theta=0$, we have $\int_{\Delta} v_hd\mu=0$. So $v_h=0$ almost everywhere in $\Delta$, then $v_h\equiv 0$ in $\bar\Delta$. Thus $\bar\Delta\subset D f(\overline{S_{f}(0,h)})$, which is a contradiction.

Thus
$$
\int_{\p\Delta} v_hd\sigma -\int_{\Delta} v_h d\mu >0.
$$
The lemma is proved.
\end{proof}

{\bf Continuing the proof of Theorem \ref{theorem_stab}.}  It follows from $\mathcal L_{A}(u)=0$ and Lemma \ref{cont_u} that $\mathcal L_{A}(u_h)<0.$ This contradicts to the assumption of relative $\widehat K$-stability. So Case 2 can not happen. The theorem follows.

\section{\bf Proof of Theorem \ref{theorem_1.3}}\label{Sec-4}

In this section we prove Theorem \ref{theorem_1.3}. Firstly, we prove the following     result.

\begin{theorem}\label{theorem_5.2}
Let $(M,\omega)$ be a $n$-dimensional compact toric manifold and $\Delta$ be its Delzant polytope. Then $(\Delta, A)$ is relatively $\widehat{K}$-polystable if and only if there is $\lambda>0$ such that
$(\Delta,A)$ is {\it uniformly relatively $K$-polystable}.
\end{theorem}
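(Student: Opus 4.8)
The plan is to establish the two implications of the equivalence separately. The direction \emph{uniform relative $K$-polystability $\Rightarrow$ relative $\widehat K$-polystability} is the routine one, and I would obtain it by approximation. If $(\Delta,A)$ is $(A,\lambda)$-stable, then for $u\in\mathcal P_{p_o}$ one has $\mathcal L_A(u)\geq\lambda\int_{\partial\Delta}u\,d\sigma\geq 0$, and since $\lambda>0$, equality forces $\int_{\partial\Delta}u\,d\sigma=0$, which together with $u\geq 0$ and convexity gives $u\equiv 0$; thus Definition \ref{defn_toric_hat_k} holds on $\mathcal P_{p_o}$. To reach an arbitrary convex (or lower semi-continuous convex) $u$, I would first subtract the supporting affine function at $p_o$, which changes nothing because $\mathcal L_A$ is linear and annihilates affine functions, and then approximate: for continuous $u$ by the uniform rational piecewise-linear approximation of Remark \ref{remark 1.3}, and for lower semi-continuous $u\in\mathcal C_\ast^P$ by the sequence of Lemma \ref{lemma_1.2.12}, for which \eqref{eqn_A} and \eqref{eqn_B} pass both $\int_{\partial\Delta}(\cdot)\,d\sigma$ and $\mathcal L_A$ to the limit. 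This gives $\mathcal L_A(u)\geq\lambda\int_{\partial\Delta}u\,d\sigma\geq 0$ for all convex $u$ and identifies the equality case, i.e. relative $\widehat K$-polystability.

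For the converse, \emph{relative $\widehat K$-polystability $\Rightarrow$ uniform relative $K$-polystability}, I would argue by contradiction, using Theorem \ref{theorem_stab} as the main tool. If $(A,\lambda)$-stability failed for every $\lambda>0$, then taking $\lambda=1/k$ and rescaling produces $u_k\in\mathcal P_{p_o}$ with $\int_{\partial\Delta}u_k\,d\sigma=1$ and $0\leq\mathcal L_A(u_k)<1/k$, so after normalization $u_k\in\mathcal Q$ and $\mathcal L_A(u_k)\to 0$. The $L^\infty$ bound on $D_{a_o}(0)$ from Lemma 5.2.3 of \cite{D1}, combined with convexity and the normalization, lets me extract a subsequence converging locally uniformly in $\Delta$; its lower semi-continuous regularization $u_\infty$ lies in $\mathcal C_\ast^1$ and is normalized at $o$. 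Since $\int_\Delta A u_k\,d\mu\to\int_\Delta A u_\infty\,d\mu$ and $u\mapsto\int_{\partial\Delta}u\,d\sigma$ is lower semi-continuous, I get $\mathcal L_A(u_\infty)\leq\liminf_k\mathcal L_A(u_k)=0$, while polystability (extended to lower semi-continuous convex functions) gives $\mathcal L_A(u_\infty)\geq 0$; hence $\mathcal L_A(u_\infty)=0$. Theorem \ref{theorem_stab} then yields $u_\infty\in L^\infty(\bar\Delta)$, and the equality clause of Definition \ref{defn_toric_hat_k} forces $u_\infty$ to be linear, so $u_\infty\equiv 0$ by the normalization $u_\infty(o)=\inf_{\bar\Delta}u_\infty=0$.

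The hard part is to convert $u_\infty\equiv 0$ into a genuine contradiction, and this is precisely where the boundary integral misbehaves: under local uniform convergence in the interior the functional $u\mapsto\int_{\partial\Delta}u\,d\sigma$ is only lower semi-continuous, so the unit mass $\int_{\partial\Delta}u_k\,d\sigma=1$ can escape to $\partial\Delta$ and vanish in the limit, making $u_\infty\equiv 0$ fully consistent with the convergence. Ruling this out is exactly the purpose of the radial estimates of Section \ref{Sec-Estimates} and the truncation $u\mapsto u_h$ of Lemma \ref{cont_u}. The plan is to apply the truncation directly to the $u_k$: for a suitable height this produces bounded competitors $u_{k,h}$ with boundary integral still comparable to $1$ and $\mathcal L_A(u_{k,h})\leq\mathcal L_A(u_k)$, while the polar-coordinate inequalities \eqref{est_int}--\eqref{est_bou} bound the interior contribution by a fixed fraction of the boundary contribution once $\max_e(R_e-r_e)$ is controlled. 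This forces $\mathcal L_A(u_k)\geq c>0$ for large $k$, contradicting $\mathcal L_A(u_k)\to 0$. Throughout, the one delicate issue is the control of the boundary integral in the passage to the limit; the rest is compactness for convex functions together with the already-established Theorem \ref{theorem_stab}.
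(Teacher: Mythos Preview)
Your $\Leftarrow$ direction is essentially the paper's argument: approximate a bounded convex $u$ by piecewise-linear $u_k$ (the paper uses Sz\'ekelyhidi's decreasing approximants, you use Remark~\ref{remark 1.3} and Lemma~\ref{lemma_1.2.12}), transfer the inequality $\mathcal L_A\geq\lambda\int_{\partial\Delta}(\cdot)\,d\sigma$ to the limit, and read off the equality case. No issue there.

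In the $\Rightarrow$ direction you have all the pieces but miss the contradiction that is already in your hands, and then propose an unnecessary truncation scheme. You explicitly assert that $\int_\Delta A u_k\,d\mu\to\int_\Delta A u_\infty\,d\mu$; since $\int_\Delta A u_k\,d\mu=\int_{\partial\Delta}u_k\,d\sigma-\mathcal L_A(u_k)=1-\mathcal L_A(u_k)\to 1$, this gives $\int_\Delta A u_\infty\,d\mu=1$. That is flatly incompatible with $u_\infty\equiv 0$. Your ``hard part'' paragraph is therefore based on a false premise: the boundary mass \emph{cannot} have escaped, because if it had, you would have $\mathcal L_A(u_\infty)=\int_{\partial\Delta}u_\infty\,d\sigma-1<0$, contradicting the polystability inequality you just invoked. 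Equivalently, your two facts $\mathcal L_A(u_\infty)=0$ and $\int_\Delta A u_\infty\,d\mu=1$ already force $\int_{\partial\Delta}u_\infty\,d\sigma=1$.

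This is exactly how the paper proceeds. It isolates the step as a Claim: if $\int_{\partial\Delta}u\,d\sigma<1$ then $\mathcal L_A(u)<0$, and Lemma~\ref{lemma_1.2.12} passes this negativity to some $\check u_k\in\mathcal C$, contradicting $\widehat K$-polystability. Hence $\int_{\partial\Delta}u\,d\sigma=1$, so $u\in\mathcal Q$ (this is also needed to legitimately invoke Theorem~\ref{theorem_stab}, a hypothesis you skipped), Theorem~\ref{theorem_stab} gives boundedness, the equality clause gives $u\equiv 0$, and the contradiction is $\int_\Delta A u\,d\mu\geq 1$. No truncation of the $u_k$, and no re-use of the Section~\ref{Sec-Estimates} machinery beyond Theorem~\ref{theorem_stab} itself, is required.
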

\begin{proof}
$\Rightarrow$: If $(\Delta,A)$ is not uniformly relatively $K$-polystable, then for any integer $k > 0$ there exists a function $u_k\in {\mc P}_{p_o}(\Delta)$ with $\int_{\partial\Delta}u_{k}d\sigma=1$ such that
$$\mc L_A(u_{k})\leq\frac{1}{k}.$$
Then there exists a subsequence (still denoted by $u_k$) locally uniformly converges to a lower semi-continuous convex function $u$. By proposition 5.2.6 in \cite{D1}, we have $\int_{\partial\Delta}ud\sigma\leq 1$.

{\bf Claim:} $\int_{\partial\Delta}ud\sigma=1$.

In fact, if $ \int_{\p\Delta }ud\sigma < 1$, since $\int_{\Delta }Aud\mu=\lim_{k\to\infty}\int_{\Delta }Au_{k} d\mu=1$, we have
\begin{equation}\label{eqn_4.16}
\mathcal{L}_{A}(u) <0.
\end{equation}
By Lemma \ref{lemma_1.2.12}, we can find a new sequence $\{\check u_{k}\}\subset\mathcal C$, which locally uniformly converges to $u$, such that
$$
\lim_{k\to \infty} \mathcal{L}_{A}(\check u_{k})=\mathcal{L}_{A}(u) <0.
$$
We get a contradiction with relative $\widehat{K}$-polystability as $k$ large enough. So we proved the claim.

By Theorem \ref{theorem_stab}, $u$ is bounded, so $u$ represents a filtration. Since $(\Delta,A)$ is relatively $\widehat{K}$-polystable, this implies that $u\in\mc G$ in $\Delta$. Since $u_k$ are normalized, $u\equiv 0$ in $\Delta$. However, by the assumption, we know that
$$\int_{\Delta}Au_kd\mu\geq 1-\frac{1}{k}$$
and the limit of this is $\int_{\Delta}Aud\mu\geq 1$, which is impossible. This completes the proof of one direction.

$\Leftarrow$: Let $u$ be a normalized convex function representing a filtration, By the construction in \cite{Sz1}, there exists a sequence $u_{k}\in\mc P$ such that $\{u_k\}$ decrease, uniformly converges to $u$ and $\mc L_A(u_k)\to\mc L_A(u)$. Hence $\mc L_A(u)\geq 0$.

Now suppose that $u$ is a nontrivial extremal function, i.e, $u\not\equiv 0$ in $\Delta$ and $\mc L_A(u)=0$. Then $\lim_{k\to\infty}\mathcal L_{A}(u_k)=0$. If $\int_{\partial\Delta} u d\sigma=0$, since $u$ is normalized, we have $u\equiv 0$ in $\Delta$. So we can assume that
$$\int_{\partial\Delta} u d\sigma=\lim_{k\to\infty}\int_{\partial\Delta} u_k d\sigma=C>0.$$
So
$$\mc L_A(u_k)\geq\lambda  \int_{\partial\Delta} u_k d\sigma\to\lambda C>0.$$
We get a contradiction. Hence $u$ can not be a nontrivial extremal function. This completes the proof.
\end{proof}

\begin{remark}
If $A$ is a constant function, by  Theorem \ref{theorem_5.2} and the existence of cscK metrics under uniform stability condition for toric manifolds (see \cite{CC2}, \cite{Li}, \cite{His}), we get the sufficiency of relative $\widehat K$-polystability for the cscK case.
\end{remark}

By Theorem \ref{theorem_5.2}, we need to prove the necessity and sufficiency of uniformly relative $K$-polystability. By Theorem 4.6 in \cite{CLS4} and Lemma \ref{lemma_1.2.12},  uniform relative $K$-polystability is a necessary condition (also see \cite{His}). In the following, we prove the sufficiency of uniform relative $K$-polystability.

\subsection{\bf The Mabuchi functional and $d_1$ metrics}
\v
In this subsection, we would use the same letter $C$ to denote different constants depending only the manifold $M$ and the background metric.

Let $(M,g)$ be a K\"ahler manifold, the Mabuchi functional $\M$ is a real-valued
function on the set of K\"ahler metrics in the same K\"ahler class $[\omega_{g}]$. The functional is defined through the formula for its variation at a metric $\omega=\omega_{g}+\sqrt{-1}\partial\bar{\partial} \phi$:
$$
\delta\mathcal{M}=\int_{M}(\mathcal{S}(\omega)-\underline{\mathcal{S}})\delta \phi \frac{\omega^{n}}{n!},
$$
where $\underline{\mathcal{S}}=\frac{\int_{M}S(\omega) \frac{\omega^{n}}{n!}}{\int_{M}  \frac{\omega^{n}}{n!}}
$ is the average value of the scalar curvature.
Denote by $Ric(\omega_{g})$ to be the Ricci form of $\omega_{g}$. In \cite{Che} Chen proved that the Mabuchi functional can be written as
$$
\M(\omega) =\int_{M} \log \left(\frac{\omega^{n}}{\omega^{n}_{g}}\right)  \frac{\omega^n}{n!} +J_{-Ric(\omega_{g})}(\omega)   .
$$
Here $\left.\int_{M} \log \left(\frac{\omega^{n}}{\omega^{n}_{g}}\right)  \frac{\omega^n}{n!} \right.$
is the entropy, and $J_{-Ric(\omega_{g})}(\omega)$ is defined by
$$
J_{-Ric(\omega_{g})}(\omega)=-\frac{1}{n!}\int_{M}\phi \sum_{k=0}^{n-1}Ric(\omega_{g})\wedge \omega_{g}^{k} \wedge \omega^{n-1-k} +\underline{\mathcal{S}}\frac{1}{(n+1)!} \int_{M} \phi \sum_{k=0}^{n} \omega_{g}^{k} \wedge \omega^{n-k}  . $$
\v
For any $p\geq 1,$ Guedj-Zeriahi (in \cite{GZ}) introduced the following space:
 $$
 \E^{p}(M,\omega_{g})=\left\{ \phi\in PSH(M,\omega_{g})\;|\;  \int_{M}\omega_{\phi}^{n}=\int_{M}\omega_{g}^{n},\int_{M}|\phi|^{p} \omega_{\phi}^{n} <\infty \right\}.
 $$
Darvas (\cite{Dar}) introduced the metric on $\H$ by
$$
\|\delta \phi\|_{\phi}=\int_{M}|\delta \phi| \frac{\omega^{n}_{\phi}}{n!},\;\;\;\;\;\forall \;\delta \phi\in T_{\phi}\H
$$
which induced the path-length distance $d_{1}$ on the space $\H.$
He proved that $(\H,d_{1})$ is a metric space,
$ (\E^{1}(M, \omega_{g}), d_{1})$ is a geodesic metric space, which is the metric completion of $(\H,d_{1})$. Darvas also obtained the following estimate in (\cite{Dar})
\begin{equation}\label{Dar-est}
\int_{M} |\phi_{1}-\phi_{2}|\frac{\omega_{1}^{n}}{n!} +\int_{M} |\phi_{1}-\phi_{2}|\frac{\omega_{2}^{n}}{n!}\leq C d_{1}(\phi_{1},\phi_{2})  ,\;\;\; \mbox{ for  any }  \phi_{1},\phi_{2}\in \E^{1}(M,\omega_{g}),
\end{equation}
for some  constant $C > 0$ depending only on $n$, where $\omega_{i}=\omega_{g}+\sqrt{-1}\p\bar{\p} \phi_{i},i=1,2$.
\v

Now we consider the toric case. The metric $\omega_{g}$ is $T^{n}$-invariant. We denote by $\H_{T^{n}}$ the subspace of $T^{n}$-invariant functions in $\H.$
Set $a=\frac{Vol(\partial \Delta,d\sigma ) }{Vol( \Delta,d\mu ) }$. Donaldson have proved that the relation of $\M(\omega_{f})$ and $\F_{a}(u)$ (see \cite{D1} Proposition 3.2.8.):
\begin{equation}\label{eqn_Ma_Do}
\M(\omega_{f})=(2\pi)^{n} \F_{a}(u).
\end{equation}
Suppose that $\phi_{1},\phi_{2}\in \H_{T^{n}}$ are determined by the convex functions $f_{1},f_{2}$ with Legendre transforms $u_{1},u_{2}$. Guedj proved that  (\cite{Gue}  Proposition 4.3) that
\begin{equation}\label{Gue-tor}
d_{1}(\phi_{1},\phi_{2})=\int_{\Delta} |u_{1}-u_{2}| d\mu.
\end{equation}
By the estimates \eqref{Dar-est} and \eqref{Gue-tor}, we have
\begin{equation}\label{est-H}
\int_{M} |\phi_{1}-\phi_{2}|\frac{\omega_{1}^{n}}{n!} +\int_{M} |\phi_{1}-\phi_{2}|\frac{\omega_{2}^{n}}{n!}\leq C\int_{\Delta} |u_{1}-u_{2}| d\mu,\;\;\;\;
\end{equation}
for any $\phi_{1},\phi_{2}\in \H_{T^{n}}.$
\v
Assume $u_{A}$ is a solution of (\ref{eqn-1.3}), Donaldson (see \cite{D1}) proved that $\F_{A}(u_{A})=\inf_{u\in\mc C}\F_{A}(u).$
By adding linear functions we can assume that $u_{A}(o)=0, u_{A}\geq 0$ (resp. $v(o)=0, v\geq 0$) where $o$ is the center of $\Delta.$
 Then we have
$$
\int_{\Delta} |u_{A}|d\mu= \int_{\Delta} u_{A}d\mu\leq C.
$$
Hence
\begin{equation}\label{eqn_bd_u}
\int_{\Delta} |u_{A}-v|d\mu\leq  \int_{\Delta} u_{A}d\mu+\int_{\Delta} vd\mu\leq C.
\end{equation}
Note that
$$
|\F_{A}(u_{A})-\F_{a}(u_{A})|\leq \int_{\Delta} |A-a|u_{A} d\mu\leq C\max_{\bar\Delta}|A-a|.
$$
Then $\F_{a}(u_{A})$ is bounded. Let $f$ be the Legendre transform of $u_{A}$.  By \eqref{eqn_Ma_Do}, we conclude that $\M(\omega_{f})$ is bounded.
\v
Next, we estimate $\left\|J_{-Ric(\omega_{g})}(\omega_{f})\right\|$. By the same calculation as in \cite{CC2},
\begin{align*}
&\left|\int_{M} \phi \sum_{k=0}^{n-1}Ric(\omega_{g})\wedge \omega_{g}^{k}\wedge \omega^{n-1-k}_{f}-n\int_{M}\phi Ric(\omega_{g})\wedge \omega_{g}^{n-1}\right| \\
=&\left|\sum_{s=0}^{n-2}\int_{M} -\p\phi \wedge  \sqrt{-1} \bar{\p}\phi \wedge Ric(\omega_{g}) \wedge  (n-1-s) \omega^{n-2-s}_{g}\wedge \omega^{s}_{f}\right| \\
\leq & n\|Ric(\omega_{g})\|_{\omega_{g}}\left(\int_{M} |\phi|\omega^{n}_{g}+\int_{M} |\phi|\omega^{n}_{f}\right).
\end{align*}
Similar we have
$$
\left| \int_{M} \phi \sum_{k=0}^{n} \omega_{g}^{k} \wedge \omega_{f}^{n-k}  -  (n+1)\int_{M} \phi  \omega_{g}^{n}   \right| \leq C \left(\int_{M} |\phi|\omega^{n}_{g}+   \int_{M} |\phi|\omega^{n}_{f}\right),
$$
for some constant $C$ depending only on $n$. Note that
$$
\left| \int_{M}\phi Ric(\omega_{g})\wedge \omega_{g}^{n-1}\right| +\left|\underline{\mathcal{S}}\int_{M} \phi  \omega_{g}^{n}\right| \leq  \left(\|Ric(\omega_{g})\|_{\omega_{g}}+\left|\underline{\mathcal{S}} \right|\right)  \int_{M} |\phi|\omega^{n}_{g}.
$$
Then we have
\begin{align*}
\left\|J_{-Ric(\omega_{g})}(\omega_{f})\right\|\leq  &C(n)\left(\|Ric(\omega_{g})\|_{\omega_{g}}+\left|\underline{\mathcal{S}}\right|\right) \left(\int_{M} |\phi|\omega^{n}_{g}+   \int_{M} |\phi|\omega^{n}_{f}\right) \\
 \leq & C(n)\left(\|Ric(\omega_{g})\|_{\omega_{g}}+\left|\underline{\mathcal{S}}\right|\right)  \int_{\Delta} |u_{A}-v|d\mu  \leq C,
\end{align*}
where $C(n)$ is a constant depending only on $n$. Here we used \eqref{est-H} in the last inequality. Then by the bound of $\M(\omega_{f})$  we have
$$
\int_{M} Fe^{F} \omega^{n}_{g}=\int_{M} \log \frac{\omega_{f}^{n}}{\omega_{g}^{n}}\omega^{n}_{f} \leq C,
$$
where $F=\log \frac{\omega_{f}^{n}}{\omega_{g}^{n}}$. Thus we get the following theorem,
\begin{theorem}\label{theorem 5.1}
If $\Delta$ is $(A,\lambda)$-stable, then the entropy $\int_{M}Fe^F\omega_{g}^{n}$ is bounded.
\end{theorem}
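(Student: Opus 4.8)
The plan is to read off the entropy bound from Chen's decomposition of the Mabuchi functional, in which the entropy appears as $\M(\omega_f)$ minus the $J_{-Ric(\omega_g)}(\omega_f)$ term: since $\M(\omega_f)=\int_M\log(\omega_f^n/\omega_g^n)\tfrac{\omega_f^n}{n!}+J_{-Ric(\omega_g)}(\omega_f)$, the entropy $\int_M Fe^F\omega_g^n=\int_M\log(\omega_f^n/\omega_g^n)\,\omega_f^n$ equals $\M(\omega_f)-J_{-Ric(\omega_g)}(\omega_f)$ up to the harmless $n!$ normalization. Hence it suffices to bound $\M(\omega_f)$ from above and to bound $|J_{-Ric(\omega_g)}(\omega_f)|$; both bounds will be obtained by transporting the functionals from $M$ to the polytope $\Delta$, where the $(A,\lambda)$-stability hypothesis is available.

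First I would bound $\M(\omega_f)$ from above. Since $u_A$ solves \eqref{eqn-1.3}, it minimizes $\F_A$ over $\mc C$, so $\F_A(u_A)\le\F_A(v)$, a fixed constant determined by the Guillemin reference $v$. Comparing $\F_A$ with $\F_a$ through $|\F_A(u_A)-\F_a(u_A)|\le C\max_{\bar\Delta}|A-a|$ shows $\F_a(u_A)$ is bounded above, and Donaldson's identity \eqref{eqn_Ma_Do}, $\M(\omega_f)=(2\pi)^n\F_a(u_A)$, then bounds $\M(\omega_f)$. The normalization $u_A(o)=0$, $u_A\ge 0$ together with the uniform estimate $\int_\Delta u_A\,d\mu\le C$ is what keeps the linear corrections in these comparisons under control.

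Next I would bound the term $J_{-Ric(\omega_g)}(\omega_f)$. Following the computation in \cite{CC2}, I would integrate by parts in each mixed expression $\int_M\phi\,Ric(\omega_g)\wedge\omega_g^k\wedge\omega_f^{n-1-k}$ and $\int_M\phi\,\omega_g^k\wedge\omega_f^{n-k}$, so that the deviation from the leading term becomes a sum of quantities $\int_M \p\phi\wedge\sqrt{-1}\bar{\p}\phi\wedge(\cdots)$ dominated by $(\|Ric(\omega_g)\|_{\omega_g}+|\underline{\mathcal{S}}|)\big(\int_M|\phi|\omega_g^n+\int_M|\phi|\omega_f^n\big)$. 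The decisive step is then to control the two manifold-side $L^1$ norms of $\phi$ by the single convex-geometric quantity $\int_\Delta|u_A-v|\,d\mu$; this is exactly the estimate \eqref{est-H}, which itself rests on the Darvas $d_1$-comparison \eqref{Dar-est} and the Guedj identity \eqref{Gue-tor}. Since \eqref{eqn_bd_u} bounds $\int_\Delta|u_A-v|\,d\mu$ by $\int_\Delta u_A\,d\mu+\int_\Delta v\,d\mu\le C$, the term $J_{-Ric(\omega_g)}(\omega_f)$ is bounded, and combining this with the bound on $\M(\omega_f)$ in Chen's formula gives entropy $\le C$.

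I expect the main obstacle to be the uniformity of the $J$-term estimate rather than the bound on $\M$. The difficulty is to pass from genuinely $M$-side mixed Monge--Amp\`ere integrals to the purely polytope-side quantity $\int_\Delta|u_A-v|\,d\mu$ without losing control of constants, which is precisely where the $d_1$-metric machinery \eqref{Dar-est}--\eqref{Gue-tor} and the normalization of $u_A$ are essential; by contrast, the upper bound on $\M$ is comparatively soft once the minimizing property of $u_A$ and the $\F_A$--$\F_a$ comparison are in hand.
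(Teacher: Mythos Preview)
Your proposal is correct and follows essentially the same route as the paper: bound $\M(\omega_f)$ via the minimizing property of $u_A$, the $\F_A$--$\F_a$ comparison, and Donaldson's identity \eqref{eqn_Ma_Do}; bound $|J_{-Ric(\omega_g)}(\omega_f)|$ via the Chen--Cheng integration-by-parts computation together with \eqref{est-H} and \eqref{eqn_bd_u}; and then read off the entropy bound from Chen's decomposition. The paper's argument is identical in structure and in the key inputs you cite.
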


\def \x {x_{1},\cdots,x_n}
\def \xx  {\xi_{1},\cdots,\xi_n}
\v
We will use the continuity method to prove Theorem \ref{theorem_1.3}. Let $A$ be the scalar function on $\bar\Delta$.
Let $I=[0,1]$ be the unit interval. At $t=0$ we start with Guillemin
metric. Let $A_0$ be its scalar curvature
on $\Delta$. Then $\Delta$ must be $(A_0, \lambda_0)$ stable for some constant
$\lambda_0>0$.  On $\Delta,$ set
$$
A_t=tA+(1-t)A_0, \;\;\; \lambda_t=t\lambda+(1-t)\lambda_0.
$$
It is easy to verify that $\Delta$ is $(A_t,\lambda_t)$-stable for any $t\in[0,1]$.

\v
Set $\Lambda=\{t|\mathcal{S}(u)=A_{t} \mbox{ has a solution in }\mathbf{S}\}$, we should show $\Lambda$ is open and closed.
Openness is standard by using LeBrun and Simanca's argument (\cite{LeSi}). For closeness, by Theorem \ref{theorem 5.1}, the entropy is bounded. By Theorem 1.2 in \cite{CC1}, all derivatives of $\varphi$ is bounded, By Proposition 1.1 in \cite{CC1} and Theorem 1.7 in \cite{CC1}, we have $\frac{1}{C}\omega_g\leq\omega_{f}\leq C\omega_g$ for some constant $C$. Since $u=f^*$, we get the bound of all higher derivatives of $u$. By Arzela-Ascoli theorem, we have $[0,1]\subset \Lambda$.

\section{\bf Scalar curvatures on homogeneous toric bundles}\label{Sec-5}

In this section, we generalize our results to homogeneous toric bundles. First, we briefly review homogeneous toric bundles and the generalized Abreu equation, For the details, see \cite{D5} and \cite{CHLLS}. Then we sketch the proof of Theorem \ref{theorem_1.1}.

\subsection{Homogeneous Toric Bundles}\label{toric-fibration}

Let $G$ be a compact semisimple Lie group, $K$ be the centralizer of a torus $S$ in $G$,
and $T$ be a maximal torus in $G$ containing $S$.
Then, $T\subset C(S)=K$ and $G/K$ is a generalized flag manifold.

Let $Z(K)$ be the center of $K$, which is an $n$-dimensional torus, denoted by $T^{n}$.
Let $(M,\omega)$ be a compact toric K\"{a}hler manifold of complex dimension $n$, where $T^{n}$
acts effectively on $M$. Let $\varrho: K\rightarrow T^{n}$ be a surjective homomorphism.
The homogeneous toric bundle $G\times_{K}M$ is defined to be the space $G\times M$ modulo the relation
$$(gh,x)=(g,\varrho(h)x)\quad\text{for any }g\in G, h\in K, x\in M.$$
Later on, we will omit $\varrho$ to simplify notations.
The space $G\times_{K}M$ is a fiber bundle with fiber $M$ and base space $G/K$,
a generalized flag manifold. There is a natural $G$-action on $G\times_{K}M$   given by
$$g\cdot[h,x]=[gh,x]
\quad\text{for any }g\in G, x\in M,$$
and a natural $T^{n}$-action on $G\times_{K}M$ given by
$$k\cdot[h,x]=[h, k^{-1}x]\quad\text{for any }k\in T^{n}.$$

\subsection{The Generalized Abreu Equation}\label{sec-2.2}

Denote by $\tau:M\rightarrow \bar{\Delta}\subset \mathfrak{t}^*$
the moment map of $M$, where $\Delta$ is a Delzant polytope. We extend $\tau: G\times_{K}M\rightarrow \bar{\Delta}$ by $\tau([g,x])=\tau(x)$.

Suppose that $\Delta$ is defined by linear inequalities $l_i(\xi):=\langle\xi,a_i\rangle- \lambda_i>0$,  for $i=1, \cdots, d$. Set
$
v(\xi)=\sum_i l_i(\xi)\log l_i(\xi).
$
It defines a K\"ahler metric on $G\times_{K}M$, which we also call the {Guillemin} metric.
For any strictly convex function $u$ with $u-v\in C^{\infty}(\bar{\Delta})$, there is a $(G,T^n)$-invariant K\"ahler metric $\mathcal{G}_{u}$ on $G\times_{K}M$ with scalar curvature given by
\begin{equation}\label{eqn 4.6}
\mathcal{S}= -\frac{1}{\mathbb {D}}\sum_{i,j=1}^n\frac{\partial^2(\mathbb {D}u^{ij})}{\partial \xi_i\partial \xi_j} + h_G.
\end{equation}
Here, $\mathbb{D}$ is called the Duistermaat-Heckman polynomial and $h_G$ is a known function.

For any strictly convex function $u$ with $u-v\in C^{\infty}(\bar{\Delta})$, there exist $(1,0)$-vector fields
$$S_{j}:=\frac{1}{2}\left(\frac{\partial}{\partial x_i}-\sqrt{-1}H_{i}\right), (i=1,...,n)$$
$$S_{\alpha}:=\frac{1}{2}\left(V_{\alpha}-\sqrt{-1}W_{\alpha}\right), \alpha\in R_{M}^{+}.$$
where the left-invariant vector fields $\{\frac{\partial}{\partial x_i}, H_i, V_{\alpha}, W_{\alpha}\}$ form a local basis of $G\times_{K}M$. The vector fields $H_i, V_{\alpha}, W_{\alpha}$ and the index $R_M^+$ are determined by $G$ and $K$. For more details, see \cite{CHLLS}, \cite{LSZ}. Let $\{dx_i, \nu_{i}, dV^{\alpha}, dW^{\alpha}\}_{1\leq i\leq n,\alpha\in R_{M}^{+}}$ be the dual 1-form of the basis.

The $(G,T^n)$-invariant K\"ahler metric $\mathcal{G}_{u}$ on $G\times_{K}M$ is given by
$$\mathcal{G}_{u}=\sum_{i,j=1}^{n}(f_{ij}dx_i\otimes dx_j+f_{ij}\nu^i\otimes\nu^j)+\sum_{\alpha\in R_M^+}D_{\alpha}(dV^{\alpha}\otimes dV^{\alpha}+dW^{\alpha}\otimes dW^{\alpha}).$$

Let $h$ be a $(G,T^n)$-invariant function defined on $G\times_{K}M$ which depends only on $\xi$. If the vector field $grad^{1,0}h$ is holomorphic, we have
$$\sum_{i,j=1}^{n}\frac{\partial}{\partial x_j}\left(f^{ij}\frac{\partial h}{\partial x_j}\right)=\sum_{i,j=1}^{n} u^{ij}\frac{\partial^2 h}{\partial \xi_i\partial \xi_j}=0.$$
So $h$ is a linear function.

In particular, if the metric is cscK, $\mathcal{S}$ is a constant function. If the metric is extremal, $\mathcal{S}$ is a linear function. Set $A=\mathcal{S}-h_G$.

We will consider the equation
\begin{equation}\label{gAbreu}
-\frac{1}{\mathbb {D}}\sum_{i,j=1}^n\frac{\partial^2(\mathbb {D}u^{ij})}{\partial \xi_i\partial \xi_j}=A,
\end{equation}
where $\mathbb{D}>0$ and $A$ are given functions defined on $\bar{\Delta}$.
The equation \eqref{gAbreu} was introduced by Donaldson \cite{D5}
in the study of scalar curvatures of toric fibrations. See also \cite{R} and \cite{N-1}.
We call \eqref{gAbreu} the generalized Abreu Equation. Note that even if $\mathcal{S}$ is a constant function, the right hand of equation (\ref{gAbreu}) is not a constant function.

\subsection{$K$-stabilities for homogeneous toric bundles}

Following Donaldson \cite{D1} (also see \cite{N-1}, \cite{CHLLS}), for any smooth function $A$ on $\bar\Delta$, we can define a functional on $\mc C$:
$$\mc F_{A}(u)= -\int_{\Delta}\log \det(u_{ij})\mathbb {D}d\mu + \mc L_{A}(u),$$
where $\mc L_{A}$ is the linear functional
$$\mc L_{A}(u) = \int_{\partial \Delta}u \mathbb {D}d\sigma - \int_{\Delta}Au
\mathbb {D}d\mu,$$
where $d\mu$ is the  Lebesgue measure on $\mathbb R^n$, and on each face $F$, $d\sigma$ is a constant multiple of the standard $(n-1)$-dimensional Lebesgue measure (see \cite{D1} for details).

The equivariant test configurations for homogeneous toric bundles are encoded by rational piecewise-linear convex functions on $\bar\Delta$. See \cite{N-1} for toric fibrations and \cite{Del} for spherical varieties. Repeating the argument of Szi\'ekelyhidi, one can check that any convex function on $\bar{\Delta}$ gives rise to a filtration of the homogeneous coordinate ring. So we have the following definitions.

\begin{defn}\label{definition_2.1.1}
Let $A\in
C^{\infty}(\bar\Delta)$ be a smooth function on $\bar\Delta$.
$({\Delta},\mathbb D, A)$ is called {\it relatively $K$-polystable} if $\mc
L_{A}(u)\geq 0$ for all rational piecewise-linear convex functions
$u$, and $\mc
L_{A}(u)=0$ if and only if $u$ is a linear function.
\end{defn}

\begin{defn}\label{definition_2.1.5}
$({\Delta},\mathbb{D},A)$ is called {\it uniformly relatively $K$-polystable}
if for any $u\in {\mc P}_{p_o}(\Delta)$
$$
\mc L_A(u)\geq \lambda\int_{\partial \Delta} u \mathbb{D}d \sigma
$$
for some constant $\lambda>0$. Sometimes, we say that $\Delta$ is
$(\mathbb{D},A,\lambda)$-stable.
\end{defn}

\begin{defn}\label{defn_toric_hat_k-1}
Let $A\in C^{\infty}(\bar\Delta)$ be a smooth function on $\bar\Delta$. $({\Delta},\mathbb D, A)$ is called {\it relatively $\widehat{K}$-polystable} if $\mc L_{A}(u)\geq 0$ for all convex  functions $u:\bar \Delta\to \mathbb R$, and $\mc L_{A}(u)=0$ if and only if $u$ is a linear function.
\end{defn}

When $A+h_G$ is a constant function, the Definition \ref{defn_toric_hat_k-1} is the same as the definition for the homogeneous toric bundles in \cite{Sz1}.

\v\v
\subsection{\bf Proof of Theorem \ref{theorem_1.1}}

Since $\mathbb D$ is positive and bounded,  we can prove the following theorem as Theorem \ref{theorem_stab} with a weight function.

\begin{theorem}\label{theorem_stab-1}
Assume that $(\Delta,\mathbb D,A)$ is relatively $\widehat K$-polystable. Let $u\in\mathcal C_{*}^{P}$, such that $u$ is lower semi-continuous, $\int_{\partial \Delta }u\mathbb D d\sigma =1, u(o)=\inf_{\bar{\Delta}}u=0$ and  $\mathcal L_{A}(u)=0$. Then
\begin{equation}
u\in L^{\infty}(\bar{\Delta}).
\end{equation}
\end{theorem}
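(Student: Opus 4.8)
The plan is to follow the proof of Theorem~\ref{theorem_stab} almost verbatim, inserting the weight $\mathbb{D}$ at the two places where measures appear, and exploiting that $\mathbb{D}$ is a positive polynomial on the compact set $\bar\Delta$, so that
$$0< m:=\min_{\bar\Delta}\mathbb{D}\le \mathbb{D}\le \max_{\bar\Delta}\mathbb{D}=:M<\infty.$$
First I would set $f=u^{*}$ and reproduce the preliminary structure: the compactness of $\overline{S_{f}(0,h)}$, the inclusions $\Delta\subset Df(\mathbb{R}^{n})\subset\bar\Delta$ of Lemma~\ref{im_normal}, and the characterization of the normal map in Lemma~\ref{normal_m}. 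These statements concern only the convex geometry of $u$ and its Legendre transform, not $\mathbb{D}$, so they carry over unchanged. I then split into the same two cases according to whether $Df(\overline{S_{f}(0,h)})=\bar\Delta$ for all large $h$. Case~1 is identical to the unweighted argument: if $Df(\overline{S_{f}(0,h_{o})})=\bar\Delta$ for some $h_{o}$, then every $q\in\bar\Delta$ equals $Df(p)$ for some $p$ in the compact set $\overline{S_{f}(0,h_{o})}$, whence $u(q)=\langle q,p\rangle-f(p)\le \mathrm{diam}(\Delta)\,\mathrm{diam}(\overline{S_{f}(0,h_{o})})$, giving $u\in L^{\infty}(\bar\Delta)$ with no reference to $\mathbb{D}$.

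In Case~2 I define $W_{h}=Df(\overline{S_{f}(0,h)})$, $B_{h}=Df(\partial\overline{S_{f}(0,h)})$, and the truncated convex function $u_{h}(\xi)=\sup_{q\in W_{h}}l_{q}(\xi)$ exactly as before. The purely convex-geometric facts --- that $\partial W_{h}\subset B_{h}$, the connectedness of $oq\cap W_{h}$ (Lemma~\ref{int_oq}), the explicit form of $u_{h}$ on the outer segment (Lemma~\ref{est_u_h}), and the radial convergence $r_{e}\to R_{e}$ of \eqref{rad_est} --- depend only on $u$ and $f$ and so need no modification. The one step that must be redone is the weighted analogue of Lemma~\ref{cont_u}, asserting $\mathcal{L}_{A}(u)>\mathcal{L}_{A}(u_{h})$ for $h$ large. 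Writing $v_{h}=u-u_{h}\ge 0$ and using $v_{h}|_{W_{h}}=0$, I have
$$\mathcal{L}_{A}(u)-\mathcal{L}_{A}(u_{h})=\int_{\partial\Delta\cap(\bar\Delta\setminus W_{h})}v_{h}\,\mathbb{D}\,d\sigma-\int_{\Delta\setminus W_{h}}Av_{h}\,\mathbb{D}\,d\mu.$$

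Passing to polar coordinates, the interior bound \eqref{est_int} acquires a factor $M$ (using $\mathbb{D}\le M$ together with the fact that $v_{h}$ is nondecreasing and affine along each outgoing ray outside $W_{h}$, so $v_{h}(r,e)\le v_{h}(R_{e},e)$), while the boundary bound \eqref{est_bou} acquires a factor $m$ (using $\mathbb{D}\ge m>0$ and $\cos(n_{F_{i}},e)\ge c_{o}$). Hence, choosing $h$ large enough that
$$\max_{e}(R_{e}-r_{e})\le \frac{m\,c_{o}}{2M\max_{\bar\Delta}|A|\,\max_{i}|n_{F_{i}}|},$$
which is possible by \eqref{rad_est}, the boundary contribution dominates and yields
$$\mathcal{L}_{A}(u)-\mathcal{L}_{A}(u_{h})\ge \frac{m\,c_{o}}{2\max_{i}|n_{F_{i}}|}\int_{S^{n-1}(1)}R_{e}^{n-1}v_{h}(R_{e},e)\,d\Theta>0,$$
the strictness coming from $v_{h}\not\equiv 0$ (since $\bar\Delta\setminus W_{h}\neq\emptyset$, and $\int_{\Delta}v_{h}\mathbb{D}\,d\mu=0$ would force $v_{h}\equiv 0$ because $\mathbb{D}>0$). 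Combined with the hypothesis $\mathcal{L}_{A}(u)=0$, this gives $\mathcal{L}_{A}(u_{h})<0$, contradicting relative $\widehat{K}$-polystability; so Case~2 cannot occur and $u\in L^{\infty}(\bar\Delta)$.

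The main --- and essentially only --- obstacle is to confirm that the weight does not upset the delicate balance between the boundary and interior integrals in the weighted version of Lemma~\ref{cont_u}. This is controlled precisely because $\mathbb{D}$ lies strictly between the positive constants $m$ and $M$: the ratio $M/m$ enters only the threshold on $h$, not the structure of the inequality, so the same radial-convergence mechanism \eqref{rad_est} closes the argument.
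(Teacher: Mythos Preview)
Your proposal is correct and follows exactly the approach the paper intends: the paper's own proof of Theorem~\ref{theorem_stab-1} consists solely of the sentence ``Since $\mathbb{D}$ is positive and bounded, we can prove the following theorem as Theorem~\ref{theorem_stab} with a weight function,'' and you have carried out precisely that, inserting the bounds $0<m\le\mathbb{D}\le M$ into the interior and boundary estimates of Lemma~\ref{cont_u} and adjusting the threshold on $h$ by the factor $M/m$.
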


By Theorem \ref{theorem_stab-1}, we have the equivalence of relative $\widehat{K}$-stability and uniformly relative $K$-stability for homogeneous toric bundles. The proof is the same as in Theorem \ref{theorem_5.2}.

\begin{theorem}\label{theorem_5.2-1}
Let $(M,\omega)$ be a $n$-dimensional compact toric manifold and $\Delta$ be its Delzant polytope. Let $G/K$ be a generalized flag manifold and $G\times_KM$ be the homogeneous toric bundle. Let $\mathbb{D}>0$ be the Duistermaat-Heckman polynomial, and $A\in C^{\infty}(\bar{\Delta})$ be a given smooth function. Then, $(\Delta, \mathbb{D}, A)$ is relatively $\widehat K$-polystable if and only if there is $\lambda>0$ such that $(\Delta,\mathbb{D},A)$ is uniformly relatively $K$-polystable.
\end{theorem}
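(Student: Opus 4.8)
The plan is to establish Theorem \ref{theorem_5.2-1} by reducing it to the proof strategy already carried out for the unweighted case in Theorem \ref{theorem_5.2}, with the measures $d\mu$ and $d\sigma$ replaced by the weighted measures $\mathbb D\, d\mu$ and $\mathbb D\, d\sigma$ respectively. The crucial input is that $\mathbb D$ is a positive polynomial on the compact set $\bar\Delta$, hence there exist constants $0 < c \le C < \infty$ with $c \le \mathbb D \le C$ on $\bar\Delta$. This two-sided bound is what allows every estimate in the toric argument to go through essentially unchanged, and it is precisely why Theorem \ref{theorem_stab-1} holds (proved ``as Theorem \ref{theorem_stab} with a weight function''). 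I would state at the outset that all the functional analysis in Section \ref{Sec-Estimates} --- the $L^\infty$ bound on $D_{a_o}(0)$, the Legendre transform machinery, and the connectedness lemmas for $W_h$ and $B_h$ --- depends only on the convex geometry of $\Delta$ and not on the weight, so they transfer verbatim.

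For the direction $\Rightarrow$ (relative $\widehat K$-polystability implies uniform relative $K$-polystability), I would argue by contradiction exactly as in Theorem \ref{theorem_5.2}. If $(\Delta,\mathbb D,A)$ were not uniformly relatively $K$-polystable, then for each integer $k>0$ there is $u_k \in \mathcal P_{p_o}(\Delta)$ with $\int_{\partial\Delta} u_k \mathbb D\, d\sigma = 1$ and $\mathcal L_A(u_k)\le \frac1k$. Passing to a locally uniformly convergent subsequence with lower semi-continuous limit $u$, the analogue of Donaldson's Proposition 5.2.6 (which only needs the weight to be bounded) gives $\int_{\partial\Delta} u\,\mathbb D\, d\sigma \le 1$. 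I would then run the same Claim: if the limit dropped below $1$, then since $\int_\Delta Au\,\mathbb D\,d\mu = \lim_k \int_\Delta A u_k \mathbb D\, d\mu = 1$ one gets $\mathcal L_A(u) < 0$, and approximating by the weighted version of Lemma \ref{lemma_1.2.12} produces smooth functions contradicting relative $\widehat K$-polystability. Having fixed the boundary integral at $1$, Theorem \ref{theorem_stab-1} shows $u$ is bounded, hence represents a filtration; relative $\widehat K$-polystability then forces $u$ to be linear and so (by normalization) $u\equiv 0$, contradicting $\int_\Delta A u_k \mathbb D\, d\mu \ge 1-\frac1k \to \int_\Delta Au\,\mathbb D\,d\mu\ge 1$. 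The reverse direction $\Leftarrow$ is the easier one: given a normalized convex $u$ representing a filtration, the construction from \cite{Sz1} yields a decreasing sequence $u_k\in\mathcal P$ with $u_k\to u$ uniformly and $\mathcal L_A(u_k)\to\mathcal L_A(u)$, so $\mathcal L_A(u)\ge 0$; and if $u$ were a nontrivial minimizer with $\mathcal L_A(u)=0$ and $\int_{\partial\Delta}u\,\mathbb D\,d\sigma = C > 0$, the uniform bound $\mathcal L_A(u_k)\ge \lambda\int_{\partial\Delta}u_k\,\mathbb D\,d\sigma \to \lambda C > 0$ gives the contradiction.

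The genuinely new verification --- already asserted in the paper to parallel Theorem \ref{theorem_stab} --- is Theorem \ref{theorem_stab-1}, and I would view that as the main obstacle. In the unweighted Lemma \ref{cont_u} the key inequality compares the boundary integral $\int_{F_i\cap(\bar\Delta\setminus W_h)} v_h\,d\sigma$ against the volume integral $\int_{D_{F_i}\cap(\Delta\setminus W_h)} v_h\,d\mu$, using the cone geometry and the factor $\cos(n_{F_i},e)\ge c_o$. With the weight inserted, one must instead control $\int v_h\,\mathbb D\,d\sigma$ against $\int v_h\,\mathbb D\,d\mu$; the two-sided bound $c\le\mathbb D\le C$ lets me absorb $\mathbb D$ into the constants, replacing $\max_{\bar\Delta}|A|$ by $\max_{\bar\Delta}|A|\cdot (C/c)$ (or similar) in the threshold for how small $\max_e(R_e-r_e)$ must be, and the radial estimate \eqref{rad_est} still guarantees this smallness for large $h$. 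The conclusion $\mathcal L_A(u) > \mathcal L_A(u_h)$ survives, and combined with $\mathcal L_A(u)=0$ it contradicts relative $\widehat K$-polystability, ruling out Case 2 and forcing the boundedness of $u$. Once Theorem \ref{theorem_stab-1} is in hand, the remainder of Theorem \ref{theorem_5.2-1} is a line-by-line transcription of the proof of Theorem \ref{theorem_5.2}, so I would state explicitly that ``the proof is the same as in Theorem \ref{theorem_5.2}'' after noting the weight substitution.
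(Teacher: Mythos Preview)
Your proposal is correct and follows precisely the paper's own approach: the paper states that ``the proof is the same as in Theorem \ref{theorem_5.2}'' once Theorem \ref{theorem_stab-1} is available, and you have correctly identified that the only modification needed throughout is the substitution of the weighted measures $\mathbb D\,d\mu$ and $\mathbb D\,d\sigma$, with the two-sided bound $0<c\le \mathbb D\le C$ on $\bar\Delta$ absorbing the weight into the constants in every estimate.
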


By Theorem \ref{theorem_5.2-1}, we need to prove the necessity and sufficiency of uniformly relative $K$-polystability. By Theorem 2.3 in \cite{CHLLS} and Lemma \ref{lemma_1.2.12},  uniformly relatively $K$-polystability is a necessary condition. So we only need to prove the sufficiency of uniformly relatively $K$-polystability.

\v
For homogeneous toric bundles, using the bound of $\mathbb D$, by the same argument as in Section \ref{Sec-4}, we have the following theorem
\begin{theorem}\label{theorem 5.1-1}
If $\Delta$ is $(\mathbb{D}, A,\lambda)$-stable, then the entropy is bounded.
\end{theorem}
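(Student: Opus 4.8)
The plan is to follow verbatim the strategy developed in Section~\ref{Sec-4} for the toric case, inserting the weight $\mathbb{D}$ into every integral and using its uniform positivity and boundedness to control the resulting error terms. Since we are told to assume Theorems~\ref{theorem_stab-1} and~\ref{theorem_5.2-1}, the reduction of relative $\widehat{K}$-polystability to uniform relative $K$-polystability is already in hand; the content of Theorem~\ref{theorem 5.1-1} is precisely the a priori entropy bound that powers the closedness half of the continuity method. First I would recall from Chen's formula (\cite{Che}) the decomposition of the Mabuchi functional $\mathcal{M}(\mathcal{G}_u)$ into the entropy term $\int \log(\omega_u^{\,n}/\omega_g^{\,n})\,\omega_u^{\,n}/n!$ plus the $J_{-\mathrm{Ric}(\omega_g)}$ term, noting that on $G\times_K M$ this formula holds on the total space just as on a toric manifold.

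Next I would establish the analogue of the identity $\mathcal{M}(\mathcal{G}_f)=(2\pi)^{n}\mathcal{F}_a(u)$ with $a=Vol(\partial\Delta,\mathbb{D}\,d\sigma)/Vol(\Delta,\mathbb{D}\,d\mu)$, where now $\mathcal{F}_a$ and $\mathcal{L}_a$ carry the weight $\mathbb{D}$ as in Section~\ref{Sec-5}. Assuming $\Delta$ is $(\mathbb{D},A,\lambda)$-stable and letting $u_A$ solve~\eqref{gAbreu} normalized by $u_A(o)=0$, $u_A\geq 0$, stability yields $\mathcal{L}_A(u_A)\geq\lambda\int_{\partial\Delta}u_A\,\mathbb{D}\,d\sigma$, which combined with $\mathcal{F}_A(u_A)=\inf_{\mc C}\mathcal{F}_A$ gives a uniform bound on $\int_\Delta u_A\,\mathbb{D}\,d\mu$, hence on $\int_\Delta|u_A-v|\,\mathbb{D}\,d\mu$ exactly as in~\eqref{eqn_bd_u}. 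Comparing $\mathcal{F}_A(u_A)$ with $\mathcal{F}_a(u_A)$ costs only $C\max_{\bar\Delta}|A-a|$ because $\mathbb{D}$ is bounded, so $\mathcal{F}_a(u_A)$ and therefore $\mathcal{M}(\mathcal{G}_f)$ are bounded above.

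I would then bound $\|J_{-\mathrm{Ric}(\omega_g)}(\mathcal{G}_f)\|$ by integration by parts on the total space, reducing the mixed curvature terms to $\int_{G\times_K M}|\phi|\,\omega_g^{\,n}+\int_{G\times_K M}|\phi|\,\omega_f^{\,n}$ up to a constant depending on $n$ and $\|\mathrm{Ric}(\omega_g)\|_{\omega_g}$, just as in the displayed chain of inequalities in Section~\ref{Sec-4}. The crucial translation step is the weighted Darvas--Guedj estimate: the $d_1$-distance in the toric-bundle setting should again be comparable to $\int_\Delta|u_1-u_2|\,\mathbb{D}\,d\mu$, so that $\int|\phi|\,\omega_g^{\,n}+\int|\phi|\,\omega_f^{\,n}\leq C\int_\Delta|u_A-v|\,\mathbb{D}\,d\mu\leq C$. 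Feeding this back into the Mabuchi decomposition and using the upper bound on $\mathcal{M}$ forces the entropy $\int Fe^{F}\omega_g^{\,n}$, with $F=\log(\omega_f^{\,n}/\omega_g^{\,n})$, to be bounded, which is the assertion.

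The main obstacle I anticipate is the last translation step, namely verifying that the $d_1$-metric and the Darvas estimate~\eqref{Dar-est} transfer cleanly from the toric manifold $M$ to the total space $G\times_K M$, and that Guedj's toric identity~\eqref{Gue-tor} acquires exactly the weight $\mathbb{D}$ rather than some more complicated Jacobian factor coming from the flag-manifold directions. This is where the $(G,T^n)$-invariance and the fiber-bundle structure must be used to reduce all total-space integrals to weighted integrals over $\Delta$; the positivity and two-sided boundedness of $\mathbb{D}$ are what keep every inequality uniform. Everything else—Chen's formula, the integration-by-parts bound on $J$, and the comparison of $\mathcal{F}_A$ with $\mathcal{F}_a$—is a routine weighted repetition of the toric argument and should present no essential difficulty.
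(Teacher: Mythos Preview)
Your proposal is correct and matches the paper's own proof, which consists of the single sentence ``using the bound of $\mathbb{D}$, by the same argument as in Section~\ref{Sec-4}''---you have simply spelled out what that means. The potential obstacle you flag (that Guedj's identity~\eqref{Gue-tor} and Darvas's estimate~\eqref{Dar-est} must be verified on the total space $G\times_K M$ with the weight $\mathbb{D}$ appearing as the fiber-integration Jacobian) is genuine and is precisely the point the paper leaves implicit; the $(G,T^n)$-invariance reduces all such integrals to $\int_\Delta(\cdot)\,\mathbb{D}\,d\mu$, after which the two-sided bound on $\mathbb{D}$ makes every inequality in Section~\ref{Sec-4} go through unchanged.
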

Then using continuity method, we can prove Theorem \ref{theorem_1.1}.

\section{\bf Optimal Degenerations}\label{Sec_6}
In this section we consider $K$-unstable toric variety for constant scalar curvatures, the argument can be generalized to the general case. For any convex function $u$, define
$$\|u\|^2=\int_{\Delta}u^{2}d\mu-\frac{\left(\int_{\Delta}ud\mu\right)^2}{Vol(\Delta,d\mu)}.$$
$\|u\|$ is also a norm for the subspace of convex functions with $\int_\Delta ud\mu=0.$

Consider the following functional
\begin{equation}
W(u)=\frac{\mathcal L_{a}(u)}{\|u\|},\;\;\;\;
\end{equation}
where $a=\frac {Vol(\partial \Delta,d\sigma )}{Vol(\Delta,d\mu)},$  $0\neq u\in \mathcal{C}_1\cap L^2(\Delta)$. Here $\mathcal C_1$ is the set of continuous convex functions on $\Delta^*$, integrable on $\partial\Delta$, where $\Delta^*$ is the union of $\Delta$ and the interiors of its co-dimension one faces. Sz\'ekelyhidi has proved that there exists a convex minimizer $u\in  \mathcal{C}_1\cap L^2(\Delta)$ with $\int_{\Delta}u d\mu=0$, which is unique up to scaling (see \cite{Sz2}).  We will prove that this $u$ is bounded.

\begin{remark}
Note that Sz\'ekelyhidi uses $L^2$-norm for the subspaces of convex functions with $\int_\Delta ud\mu=0.$  Since $\|\cdot\|$ and the functional $W(u)$ are invariant when we replace $u$  by $u+c,$ where $c$ is a constant, all results are the same.
\end{remark}

\begin{theorem}
Assume that the toric variety with moment polytope $\Delta $ be unstable. Then there exists a convex minimizer $u\in L^\infty(\bar \Delta)$ for $W$, which gives a filtration.  $u$ is unique up to scaling.
\end{theorem}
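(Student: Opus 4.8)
The existence of a convex minimizer $u\in\mathcal{C}_{1}\cap L^{2}(\Delta)$ with $\int_{\Delta}u\,d\mu=0$, unique up to scaling, is already supplied by Sz\'ekelyhidi \cite{Sz2}, so the two remaining points are that this $u$ is bounded and that a bounded $u$ gives a filtration. The latter is immediate: once $u\in L^{\infty}(\bar\Delta)$ it is a finite-valued convex function on $\bar\Delta$, and the construction of \cite{Sz1} (used already in the proof of Theorem \ref{theorem_5.2}) attaches to it a filtration $\chi_{u}$; the uniqueness up to scaling is exactly Sz\'ekelyhidi's assertion. Thus the entire content is the boundedness of the minimizer, and the plan is to obtain it by the truncation technique of Section \ref{Sec-Estimates}, now weighed against the norm $\|\cdot\|$. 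Since $\|\cdot\|$ and $W$ are unchanged by adding an affine function and by scaling, I would first renormalize $u$ so that $u(o)=\inf_{\bar\Delta}u=0$ (taking the lower semicontinuous regularization so that $u\in\mathcal{C}_{*}^{P}$ and the machinery of Section \ref{Sec-Estimates} applies); as $\Delta$ is unstable the minimum value of $W$ is negative, i.e. $\mathcal{L}_{a}(u)<0$.

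Suppose, for contradiction, that $u\notin L^{\infty}(\bar\Delta)$. Passing to the Legendre transform $f=u^{*}$ and to the sections $S_{f}(0,h)$, unboundedness places us in Case 2 of Theorem \ref{theorem_stab}, namely $\bar\Delta\setminus W_{h}\neq\emptyset$ for every $h$, where $W_{h}=Df(\overline{S_{f}(0,h)})$. I would then form the truncations $u_{h}(\xi)=\sup_{q\in W_{h}}l_{q}(\xi)$ exactly as in Section \ref{Sec-Estimates}: each $u_{h}$ is convex and bounded, coincides with $u$ on $W_{h}$, satisfies $0\le u_{h}\le u$, and $u_{h}\uparrow u$ as $W_{h}\uparrow\Delta$ by \eqref{rad_est}. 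Applying Lemma \ref{cont_u} with $A=a$ fixes the sign of the numerator: $\mathcal{L}_{a}(u_{h})<\mathcal{L}_{a}(u)<0$ for all large $h$, together with the quantitative gap $\mathcal{L}_{a}(u)-\mathcal{L}_{a}(u_{h})\ge c_{1}\int_{S^{n-1}(1)}R_{e}^{n-1}v_{h}(R_{e},e)\,d\Theta>0$, where $v_{h}=u-u_{h}$ and $c_{1}>0$ is the constant from that lemma.

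The heart of the argument is to control the denominator, and this is the step I expect to be the main obstacle, since the earlier $\widehat K$-stability proofs only ever needed the linear functional $\mathcal{L}_{a}$ and never the $L^{2}$-norm $\|v\|^{2}=\int_{\Delta}v^{2}d\mu-Vol(\Delta,d\mu)^{-1}(\int_{\Delta}v\,d\mu)^{2}$. Writing $w=u+u_{h}$ and $\bar w=Vol(\Delta,d\mu)^{-1}\int_{\Delta}w\,d\mu$, a direct computation gives the identity $\|u\|^{2}-\|u_{h}\|^{2}=\int_{\Delta}v_{h}\,(w-\bar w)\,d\mu$. I would prove this is $\ge 0$ for large $h$ as follows. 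The weight $v_{h}$ is supported on the thin outer shell $\Delta\setminus W_{h}$; because $u$ is normalized with its minimum at $o$, it is nondecreasing along each ray $oq$, so on that shell $u\ge u|_{\partial W_{h}}$ and hence $w\ge u$. In the directions where $u$ blows up, $u$ is large on the shell, so $w-\bar w\ge 0$ there and the contribution is nonnegative; in the tame directions the bound $0\le v_{h}(q)\le u(q)-u(q_{1})\to 0$ (a consequence of $u_{h}\le u$ together with $r_{e}\to R_{e}$) forces the possibly-negative contribution to be negligible. This would yield $\|u_{h}\|\le\|u\|$ for $h$ large.

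Granting $\|u_{h}\|\le\|u\|$, the conclusion is immediate: from $\mathcal{L}_{a}(u_{h})<\mathcal{L}_{a}(u)<0$ and $0<\|u_{h}\|\le\|u\|$ one gets $W(u_{h})=\mathcal{L}_{a}(u_{h})/\|u_{h}\|<\mathcal{L}_{a}(u)/\|u\|=W(u)$, contradicting the minimality of $u$. Therefore $u\in L^{\infty}(\bar\Delta)$, and the theorem follows. If the clean inequality $\|u_{h}\|\le\|u\|$ proves awkward to establish uniformly in the tame directions, the fallback is to retain the quantitative gap of Lemma \ref{cont_u} and show it dominates the $o(1)$ growth of $\|u_{h}\|^{2}-\|u\|^{2}$, which still forces $W(u_{h})<W(u)$; making this domination quantitative is where I expect the genuine technical work to lie.
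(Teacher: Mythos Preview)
Your overall architecture is right—truncate via $u_h$, show $W(u_h)<W(u)$, contradict minimality—but there is a genuine gap at the normalization step that invalidates both your primary and fallback arguments as written.

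The norm $\|u\|^2=\int_\Delta u^2\,d\mu-Vol(\Delta)^{-1}(\int_\Delta u\,d\mu)^2$ used in Section \ref{Sec_6} is invariant under adding \emph{constants}, but not under adding non-constant linear functions (check directly, or see the Remark after the definition). Hence $W$ is not affine-invariant, and subtracting $u(o)+\langle p^o,\xi\rangle$ from $u$ generally destroys the minimizing property: you would be proving $W(\tilde u_h)<W(\tilde u)$ for a $\tilde u$ that is \emph{not} a minimizer, which yields no contradiction. The paper avoids this by keeping the original minimizer $u$ (with $\int_\Delta u\,d\mu=0$) and introducing the normalized $u^o=u-\ell$ \emph{only} to run the truncation machinery of Section \ref{Sec-Estimates}; one then sets $u_h=u_h^o+\ell$ and compares $W(u_h)$ directly to $W(u)$. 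The observation that $\|u\|\le\|u+\ell\|$ for every linear $\ell$ (a consequence of minimality together with $\mathcal L_a(u+\ell)=\mathcal L_a(u)<0$) confirms that the minimizer sits at the bottom of its affine orbit, so your renormalized $\tilde u$ really can have strictly larger norm.

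Your ``primary'' estimate $\|u_h\|\le\|u\|$ via $\int_\Delta v_h(w-\bar w)\,d\mu\ge 0$ is also unjustified: on the shell in tame directions $w$ can be smaller than $\bar w$ while $v_h>0$, and there is no mechanism forcing cancellation. The paper does not attempt this. Instead it follows exactly your fallback, made precise: using $0\le u_h^o\le u^o$ and $\int_\Delta u_h\,d\mu\le\int_\Delta u\,d\mu=0$, one obtains $\|u_h\|\le\|u\|+C_2\int_\Delta v_h\,d\mu$, and then the polar-coordinate bounds \eqref{est_int}--\eqref{est_bou} give $\int_\Delta v_h\,d\mu\le C_3\max_e(R_e-r_e)\int_{\partial\Delta}v_h\,d\sigma$ while $\mathcal L_a(u)-\mathcal L_a(u_h)\ge(1-aC_3\max_e(R_e-r_e))\int_{\partial\Delta}v_h\,d\sigma$. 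For $\max_e(R_e-r_e)$ small both effects scale with $\int_{\partial\Delta}v_h\,d\sigma$, and a direct comparison of the fractions forces $W(u_h)<W(u)$. So the ``quantitative domination'' you flag as the hard part is exactly the content of the paper's argument, but making it work requires the un-normalized/normalized split you skipped.
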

The proof is similar to the proof of Theorem \ref{theorem_stab}.
\begin{proof}
Uniqueness is given by Theorem 5 in \cite{Sz2}. We only need to prove that $|u|_{L^\infty(\Delta)}<+\infty.$ Let $o$ be the center of $\Delta$. Choose coordinate system $\xi_i$ such that $\xi(o)=0.$

By the result of Sz\'ekelyhidi, we can assume that
$$
\int_{\Delta}ud\mu =0,\qquad\int_{\partial \Delta}ud\mu\leq C_o,\qquad \|u\|_{L^2(\Delta)}=1.
$$
Then $\|u\|=1.$ Since $u$ is continuous in $\Delta,$ we have
$$|u(o)|\leq C_1, \quad |p^o|\leq C_1,\quad\forall p^o\in Du(o)$$
for some constant $C_1>0.$

We can assume that $u$ is lower semi-continuous. In fact, let $u^*$ be the lower semi-continuous regularization of $u.$ Since $u$ is continuous and convex, $u|_{\Delta }=u^*|_{\Delta }$ and $u|_{\partial \Delta }\geq u^{*}|_{\partial \Delta }$, we have
$$\mathcal L_a(u^*)\leq\mathcal L_a(u),\;\;\;\;\|u^*\|=\|u\|.$$
So $ W(u)\leq W(u^*)\leq W(u)$, $u^*$ is also a minimizer.

Since $\mathcal L_a(u)$ is invariant by adding a linear function, $W(u)=\inf_{\mathcal C_1\cap L^2(\Delta)} W,$ we get,
$$
\|u\|\leq  \|u+\ell(\xi)\|
$$
for any linear function $\ell(\xi).$ Set $u^o=u-u(o)-\langle p^o,\xi\rangle,$ where $p^o\in Du(o).$ Then $u^o$ is normalized at $o$. Note that $u^o$ may not be minimizer. Let $f^o$ be the Legendre transform of $u^o.$ As in the proof of  Theorem \ref{theorem_stab} we can repeat the argument for $u^o$ and construct  $u^o_h$. Since $o\in W_h,$ $u^o_h$ is also normalized at $o.$ Set $u_h=u^o_h+u(o)+\langle p^o,\xi \rangle.$ Then we have
\begin{equation}
0\leq u_h^o \leq u^o,\quad u_h\leq u,\quad u_h|_{W_h}=u|_{W_h},\quad u-u_h\equiv u^o-u^o_h.
\end{equation}
Set $v_h=u-u_h.$ Since $\mathcal L_a$ is invariant by adding a linear function, we have
\begin{equation}
\mathcal L_a(u)=\mathcal L_a(u^o),\quad\mathcal L_a(u_h)=\mathcal L_a(u^o_h).
\end{equation}
We only need  the following claim, the other argument is word by word.

{\bf Claim.} Assume that  $\bar\Delta\setminus D f^o(\overline{S_{f^o}(0,h)})\neq \emptyset$ for any $h>0.$ Then
$$W(u) >W(u_h),$$
as $h$ large enough.
Set $\ell(\xi)=u(o)+\langle p^o,\xi\rangle .$
Since $0\leq u^o_h\leq u^o$ and $u=u^o+\ell,u_h=u_h^o+\ell$.
\begin{align*}
\|u_h\|^2&=\|u\|^2+\int_\Delta\left(-(u^o+\ell)^2+(u^o_h+\ell)^2\right)d\mu +\frac{\left(\int_\Delta ud\mu\right)^2-\left(\int_\Delta u_hd\mu\right)^2}{Vol(\Delta,d\mu)} \\
&=\|u\|^2+\int_\Delta \left((u_h^o)^2-(u^o)^2\right)d\mu -2\int_{\Delta} \ell v_hd\mu  +\frac{\left(\int_\Delta v_hd\mu\right)\left(\int_\Delta ( 2u_h+v_h)d\mu\right)}{Vol(\Delta,d\mu)} \\
&\leq \|u\|^2+ 2\max_{\bar\Delta}|\ell|\int_{\Delta}v_h d\mu+\frac{\left(\int_\Delta v_hd\mu\right)^2 }{Vol(\Delta,d\mu)}\\
&= \left(\|u\|+\frac{\max_{\bar\Delta}|\ell|\int_{\Delta}v_h d\mu}{\|u\|}\right)^2+\left(\frac{1}{Vol(\Delta,d\mu)}-\frac{\max_{\bar\Delta}|\ell|^2}{\|u\|^2}\right)\left(\int_{\Delta}v_h d\mu\right)^2
\end{align*}
where we used
$$
\int_\Delta\left( (u^o)^2-(u^o_h)^2\right)d\mu \geq 0,\quad v_h\geq 0,\quad  \int_\Delta u_hd\mu \leq \int_\Delta ud\mu=0$$
in the last inequality.
Hence by $\|u\|=1$ and the Cauchy-Schwarz inequality   we have
$$
\|u_h\|\leq \|u\|+C_2\int_{\Delta} v_hd\mu,
$$
where $C_2= \max_{\bar\Delta}|\ell|+\frac{1}{\sqrt{Vol(\Delta,d\mu)}}.$ 
Since $\int_{\partial \Delta }u^od\mu \leq C_o, |u(o)|\leq C_1, |p^o|\leq C_1,$ we have 
 $$0\leq \int_{\partial \Delta } v_h d\mu \leq \int_{\partial \Delta }u^o d\mu \leq \int_{\partial \Delta }ud\mu+ \int_{\partial \Delta }|\ell|d\mu\leq C.$$
Combining the estimate \eqref{est_int} and \eqref{est_bou}, as in the proof of Theorem \ref{theorem_stab}, we have
$$
\int_\Delta v_hd\mu \leq  C_3\max_{e\in S^{n-1}(1)}(R_e-r_e)\int_{\partial \Delta } v_h d\sigma,
$$
and
$$\mathcal L_a(u_h)<\mathcal L_a(u)< 0$$
where $C_3=\max_i  |n_{F_i}|.$ Applying the estimates above we obtain that
\begin{align*}
W(u_h)=\frac{\mathcal L_a(u)-\mathcal L_a(v_h)}{\|u_h\|}\leq \frac{\mathcal L_a(u)-(1-aC_3\max_{e\in S^{n-1}(1)}(R_e-r_e))\int_{\partial \Delta } v_h d\sigma}{\|u\|+C_2C_3\max_{e\in S^{n-1}(1)}(R_e-r_e)\int_{\partial \Delta } v_h d\sigma}.
\end{align*}
Hence when $ C_3(a-W(u)C_2)(\max_{e\in S^{n-1}(1)}(R_e-r_e)) \leq \frac{1}{2},$ we have
\begin{align*}
&W(u_h)-W(u) \\
\leq &\frac{\left[-W(u)C_2C_3\max_{e\in S^{n-1}(1)}(R_e-r_e)-(1-aC_3\max_{e\in S^{n-1}(1)}(R_e-r_e))\right]\int_{\partial \Delta } v_h d\sigma}{\|u\|+C_2C_3\max_{e\in S^{n-1}(1)}(R_e-r_e)\int_{\partial \Delta } v_h d\sigma} \\
\leq & -\frac{\int_{\partial \Delta } v_h d\sigma}{2\left(\|u\|+C_2C_3\max_{e\in S^{n-1}(1)}(R_e-r_e)\int_{\partial \Delta } v_h d\sigma\right) }<0.
\end{align*}
The claim is proved.

Since $W(u)=\inf_{\mathcal C_1\cap L^2(\Delta)} W$, we can conclude that there exists a constant $h>0$ such that  $\bar\Delta\setminus D f^o(\overline{S_{f^o}(0,h)})=\emptyset$. Then as in the proof of Theorem  \ref{theorem_stab} we have
$|u^o|_{L^\infty(\Delta)} \leq C_4,$ and hence
$$
|u |_{L^\infty(\Delta)} \leq  |u^o|_{L^\infty(\Delta)}+\max_{\bar\Delta}|\ell|\leq C_5,
$$
for some constants $C_4,C_5$. The theorem is proved.
\end{proof}

\bibliography{<your-bib-database>}

\end{document}